\theoremstyle{plain}
\newtheorem{master}{Master}[section]
\newtheorem{prop}[master]{Proposition}
\newtheorem{thm}[master]{Theorem}
\newtheorem{fact}[master]{Fact}
\newtheorem{lem}[master]{Lemma}
\newtheorem{cor}[master]{Corollary}
\newtheorem{question}[master]{Question}
\newtheorem{claim}[master]{Claim}
\theoremstyle{definition}
\newtheorem{defin}[master]{Definition}
\theoremstyle{remark}
\newtheorem{remark}[master]{Remark}
\numberwithin{equation}{section}
\newcommand{\Rea}{\mathbb{R}}
\newcommand{\Nat}{\mathbb{N}}
\newcommand{\Rat}{\mathbb{Q}}
\newcommand{\Int}{\mathbb{Z}}
\newcommand{\dist}{\mathrm{dist}}
\newcommand{\Gr}{\mathbb{G}}
\begin{document}
\title{Generic norms and metrics on countable abelian groups}
\author[M. Doucha]{Michal Doucha}
\address{Laboratoire de Math\' ematiques de Besan\c con\\Universit\' e de Franche-Comt\' e\\France} 
\address{Institute of Mathematics CAS, \v Zitn\' a 25, 115 67 Prague, Czech Republic}
\email{doucha@math.cas.cz}

\keywords{countable abelian groups, generic norms, invariant metrics, extremely amenable groups, universal abelian groups, Urysohn space}
\subjclass[2010]{Primary 22A05, 03E15; Secondary 20K99}
\begin{abstract}
For a countable abelian group $G$ we investigate generic properties of the space of all invariant metrics on $G$. We prove that for every such an unbounded group $G$, i.e. group which has elements of arbitrarily high order, there is a dense set of invariant metrics on $G$ which make $G$ isometric to the rational Urysohn space, and a comeager set of invariant metrics such that the completion is isometric to the Urysohn space. This generalizes results of Cameron and Vershik, Niemiec, and the author.

Then we prove that for every countable abelian $G$ such that $G\cong \bigoplus_\Nat G$ there is a comeager set of invariant metrics on $G$ such that all of them give rise to the same metric group after completion. If moreover $G$ is unbounded, then using a result of Melleray and Tsankov we get that the completion is extremely amenable.
\end{abstract}
\maketitle
\section*{Introduction}
For an unbounded countable abelian group $G$, J. Melleray and T. Tsankov prove in \cite{MeTs} that the set of all invariant metrics on $G$ which make $G$ extremely amenable is comeager in the Polish space of all invariant metrics on $G$. That result motivated the work presented in this paper. We focus on two main themes:
\subsection*{Groups isometric to the Urysohn space} In \cite{CaVe}, Cameron and Vershik prove that there is an invariant metric on $\Int$ which makes it isometric to the rational Urysohn space. In particular, the completion is isometric to the Urysohn space and thus the Urysohn space has a structure of a monothetic abelian group. Niemiec in \cite{Nie1} proves that the Shkarin's universal metric abelian Polish group is isometric to the Urysohn space and its canonical countable dense subgroup (which is $\bigoplus_\Nat \Rat/\Int$) is isometric to the rational Urysohn space. Here we generalize these results by proving that actually every unbounded countable abelian group, i.e. group having elements of arbitrarily high order, admits an invariant metric which makes it isometric to the rational Urysohn space. In particular, this covers the cases of $\Int$ and $\bigoplus_\Nat \Rat/\Int$. We prove something more general stated in the theorem below.
\begin{thm}\label{intro_thm1}
For any unbounded countable abelian group $G$, the set of all invariant metrics which make $G$ isometric to the rational Urysohn space is dense in the space of all invariant metrics on $G$. Moreover, the set of all invariant metrics on $G$ with which the completion is isometric to the Urysohn space is dense $G_\delta$.
\end{thm}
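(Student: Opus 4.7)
The plan is to encode the (rational) Urysohn property via the standard one-point extension condition and to drive both a Baire-category and a back-and-forth argument from a single extension lemma that exploits the unboundedness of $G$. Recall that the space $\mathcal{M}(G)$ of invariant metrics on $G$ is Polish in the topology whose basic open sets prescribe finitely many distances to within a given error, and that a countable $\Rat$-valued metric space $(X,d)$ is isometric to the rational Urysohn space iff for every finite $F\subseteq X$ and every $\phi\colon F\to\Rat_{>0}$ satisfying $|\phi(f)-\phi(f')|\le d(f,f')\le \phi(f)+\phi(f')$ there is $x\in X$ with $d(x,f)=\phi(f)$ for all $f\in F$; the completion is the Urysohn space precisely under the corresponding approximate version.

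The core technical step is the following extension lemma: given $\rho\in\mathcal{M}(G)$, a finite $F\subset G$, a $\rho$-Kat\v{e}tov function $\phi\colon F\to\Rat_{>0}$, and $\varepsilon>0$, there exists $d\in\mathcal{M}(G)$ with $|d(x,y)-\rho(x,y)|<\varepsilon$ for all $x,y\in F$ and some $g\in G$ satisfying $d(g,f)=\phi(f)$ for every $f\in F$. To produce such a $g$ I would first observe that $G/\langle F\rangle$ is still unbounded: otherwise some $K$ with $KG\subseteq\langle F\rangle$ would force, together with the finite generation of $\langle F\rangle$, a bound on the torsion orders in $G$. Pick $g\in G$ whose coset modulo $\langle F\rangle$ has very large order $M$; then define an invariant pseudonorm on $H:=\langle F,g\rangle$ as the maximal invariant extension (a Graev-type infimum over group-word decompositions) of the data $\rho|_{\langle F\rangle}$ together with the prescribed values $n(g-f_i):=\phi(f_i)$. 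Finally extend from $H$ to all of $G$ using a standard Graev/Kat\v{e}tov amalgamation along the filtration of $G$ by finitely generated subgroups containing $H$.

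Granted the lemma, both conclusions are routine. For the $G_\delta$ part, enumerate all pairs $(F,\phi)$ with $F\subset G$ finite and $\phi\colon F\to\Rat_{>0}$; for each such pair and each $n\in\Nat$ the set of $d\in\mathcal{M}(G)$ for which either $\phi$ fails the Kat\v{e}tov condition relative to $d$, or there is some $g\in G$ with $|d(g,f)-\phi(f)|<1/n$ for all $f\in F$, is open by continuity of evaluation and dense by the lemma. Their countable intersection is dense $G_\delta$, and every metric in it has a completion satisfying the approximate extension property, hence isometric to the Urysohn space. For the density statement, a back-and-forth argument enumerating both $G$ and all rational Kat\v{e}tov demands, applying the lemma at each stage with geometrically decreasing tolerance on already-fixed data, yields a rational invariant metric realising every demand exactly, i.e.\ an isometry with the rational Urysohn space.

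The main obstacle is the extension lemma, and specifically the torsion case, where the chosen $g$ satisfies unavoidable relations $Mg\in\langle F\rangle$. One must verify that when $M$ is chosen very large compared to the orders of elements of $F$ and the denominators appearing in $\phi$ and $\rho|_F$, these relations (and the intermediate ones coming from $\langle g\rangle\cap\langle F\rangle$) never force the maximal invariant extension to violate either the prescribed values $\phi(f_i)$ or the $\varepsilon$-closeness to $\rho$ on $F\times F$. This quantitative control on the Graev infimum is the heart of the argument, and it is precisely the unboundedness hypothesis on $G$ that provides the needed room.
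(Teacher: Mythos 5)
Your overall architecture (the one-point extension characterization, a single extension lemma, Baire category for the $G_\delta$ part, back-and-forth for density) matches the paper's, but there are two genuine gaps. First, your method for producing the witness $g$ fails: the claim that $G/\langle F\rangle$ is unbounded is false in general --- take $G=\Int$ and $F=\{0,1,-1\}$, so that $\langle F\rangle=G$ and the quotient is trivial, even though $\Int$ is the motivating example of the theorem. Unboundedness of $G$ (which may come from a single element of infinite order) does not survive passage to the quotient by a finitely generated subgroup, so ``pick $g$ whose coset has very large order $M$'' may be impossible. The paper measures farness differently: it asks that $g$ have large distance from $\bar F=\{a-b:a,b\in F\}$ in the oriented Cayley graph with generators $\bar F\cup\{g\}$, i.e.\ that $n\cdot g$ not be expressible as a short word in $\bar F$ for small $n>0$; such a $g$ can be found \emph{inside} $\langle F\rangle$ (e.g.\ $g=N\cdot f$ for $f$ of infinite order and $N$ huge), and this is exactly what Lemma \ref{unbddgrp_lem} supplies and what the hypothesis $\dist(g,\bar A)>2\frac{M}{m}$ of Proposition \ref{Katetovext_prop} exploits. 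Your quotient-order condition is strictly stronger where it is available, but it is unavailable precisely in the basic cases.

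Second, you explicitly defer ``the heart of the argument'' --- verifying that the Graev-type infimum neither collapses the prescribed values $\phi(f_i)$ nor disturbs the norm on $\langle F\rangle$ --- and this is the nontrivial case analysis carried out in Proposition \ref{Katetovext_prop}, so the extension lemma is not actually established. Moreover, as stated your lemma only guarantees $\varepsilon$-closeness to the old data on $F$. That suffices for the dense-$G_\delta$/completion statement, but not for the first assertion: to make $G$ \emph{exactly} isometric to $\Rat\mathbb{U}$ you need each Kat\v etov demand realized by an extension that does not perturb the already-constructed partial norm, since a back-and-forth that perturbs fixed data at every stage (even with summable tolerances) never realizes any demand exactly in the limit. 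The paper's construction builds a nested increasing chain of rational partial norms $\lambda_0\subseteq\lambda_1\subseteq\cdots$, realizing each demand exactly and permanently, and obtains density by choosing $\lambda_0$ to be a rational approximation of an arbitrary prescribed partial norm on a finite set.
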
 
\subsection*{Generic metrics} We motivate the next topic by the following facts. We recall that in the Polish space of all countable graphs, those that are isomorphic to the random graph form a comeager subset. One cannot expect literally the same property for a metric structure, however it is true that in the Polish space of all metrics on a countable set, those whose completion is isometric to the Urysohn universal space form a comeager set. Analogously, one can show that in the Polish space of all norms on the countable infinite-dimensional vector space over $\Rat$, those whose completion is isometric to the Gurarij space (\cite{Gu}) form a comeager set.

Our aim is to find generic metrics on countable abelian groups. There are of course many countable abelian groups to consider. However, we can prove the following result.
\begin{thm}\label{intro_thm2}
Let $G$ be a countable abelian group such that $G\cong \bigoplus_\Nat G$. Then there is a comeager set of invariant metrics on $G$ such that all of them give rise to the same metric group after the completion.
\end{thm}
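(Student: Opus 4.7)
I would adapt the Fra\"iss\'e-theoretic framework that already underlies Theorem 1 to the present setting. The space $\mathcal{M}(G)$ of invariant metrics on $G$ is Polish under the topology of pointwise convergence (identifying an invariant metric with its associated length function $g\mapsto d(g,0)$). The strategy is to isolate a dense $G_\delta$ set $\mathcal{M}^\ast(G)\subseteq \mathcal{M}(G)$, each of whose metrics makes $(G,d)$ the countable Fra\"iss\'e limit of a single fixed class $\mathcal{K}_G$ of rationally metrized finitely generated abelian groups; all such $(G,d)$ will then share the same completion, call it $\widehat{\Gr}_G$.

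\textbf{The Fra\"iss\'e class.} Let $\mathcal{K}_G$ consist of pairs $(H,\rho)$, where $H$ is a finitely generated abelian group embeddable into $G$ and $\rho$ is a rational-valued invariant metric on $H$. Beyond the obvious hereditary property one must check joint embedding and amalgamation. Given a common substructure $(A,\rho_0)\le (H_1,\rho_1),(H_2,\rho_2)$, one forms the abelian pushout $B=(H_1\oplus H_2)/\{(a,-a):a\in A\}$ and equips it with the natural maximal invariant metric extending $\rho_1,\rho_2$ (for rationality, one performs a small rational perturbation as in the standard metric Fra\"iss\'e construction). At this point the hypothesis $G\cong\bigoplus_\Nat G$ is indispensable: writing $G\cong G\oplus G\oplus\cdots$, embed $H_1$ and $H_2$ into two separate summands and identify their copies of $A$, producing a group embedding of $B$ into $G$ and certifying $B\in\mathcal{K}_G$.

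\textbf{Genericity of the extension property.} Declare $d\in\mathcal{M}^\ast(G)$ if, for every finitely generated $A\le G$ and every one-point extension $(B,\rho)\in\mathcal{K}_G$ of $(\langle A\rangle, d|_{\langle A\rangle})$ up to any prescribed rational tolerance, there exists a group embedding $\phi\colon B\to(G,d)$ fixing $A$ pointwise and approximating $\rho$ to within that tolerance. Each such instance defines an open condition on $d$, and density follows by a standard patching argument that is powered once more by $G\cong\bigoplus_\Nat G$: given finitely many pre-specified values of $d$ in a basic open neighbourhood, one realizes the required new elements of $B\setminus A$ inside a direct summand of $G$ disjoint from those used by the constraints and extends $d$ there freely. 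Intersecting countably many such conditions produces a dense $G_\delta$ set $\mathcal{M}^\ast(G)$.

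\textbf{Uniqueness and main obstacle.} For $d_1,d_2\in\mathcal{M}^\ast(G)$, a standard approximate back-and-forth inside the two completions yields an isometric group isomorphism $\widehat{(G,d_1)}\to\widehat{(G,d_2)}$, as in the Fra\"iss\'e limit uniqueness theorem in its metric incarnation. The main technical obstacle is amalgamation in $\mathcal{K}_G$: one must produce an amalgam simultaneously rationally metrized and embeddable into $G$. Preserving rationality is a familiar rational-approximation move, but preserving embeddability into $G$ is exactly what $G\cong\bigoplus_\Nat G$ buys us; without that hypothesis $\mathcal{K}_G$ need not be closed under amalgamation and the entire scheme collapses, which is consistent with the statement being false for arbitrary countable abelian groups.
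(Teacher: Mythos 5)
Your overall architecture (a dense $G_\delta$ extension property plus an approximate back-and-forth for uniqueness of the completion) matches the paper's, but the amalgamation step at the heart of your scheme is wrong, and the error is fatal rather than cosmetic.

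First, embedding $H_1$ and $H_2$ into two disjoint summands of $G\cong\bigoplus_\Nat G$ does \emph{not} produce an embedding of the pushout $B=(H_1\oplus H_2)/\{(a,-a):a\in A\}$: the subgroup of $G$ generated by the two disjoint copies is $H_1\oplus H_2$ itself, in which the two images of $A$ are \emph{not} identified. Worse, the pushout often fails to embed into $G$ at all. Take $G=\bigoplus_\Nat\Int$, $A=\Int$, and let $A$ embed into $H_1=H_2=\Int$ as $2\Int$; then $(\Int\oplus\Int)/\langle(2,-2)\rangle$ contains the $2$-torsion element $(1,-1)$, so it does not embed into the torsion-free group $G$. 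The same phenomenon kills your extension property as stated: for $G=\bigoplus_\Nat\Int$ and $A=\langle e_1\rangle$ with $e_1$ a basis element, the one-point extension $B=\langle A,b\,:\,2b=e_1\rangle\cong\Int$ lies in your class $\mathcal{K}_G$, but no $g\in G$ satisfies $2g=e_1$, so no embedding of $B$ into $G$ can fix $A$ pointwise. Hence your set $\mathcal{M}^\ast(G)$ is \emph{empty} in this case, and the density argument cannot be repaired by perturbing metrics, since the obstruction is purely algebraic.

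The paper's proof works precisely because it abandons exact amalgamation over a fixed subgroup. Its condition \eqref{Gdeltacond} asks only for a \emph{fresh, algebraically independent} copy $F_{A'}$ of the extension (placed in a new direct summand, so no relations with the old elements are imposed), together with the metric smallness requirement $\lambda(d_a-d_{\phi(a)})<\varepsilon$ for $a$ in the old part $A_0$ — the old copy is moved a little in the norm, never fixed. The back-and-forth then produces sequences $(d_{a_i^j})_j$ that are merely Cauchy; their limits live in the completion $\overline{(G,\lambda)}$ and generate a dense subgroup there, and the isometric isomorphism is constructed between the completions, not between copies of $G$. This replacement of exact by approximate amalgamation, with the displacement $\delta_\phi$ controlled quantitatively, is the missing idea in your proposal; without it the class $\mathcal{K}_G$ is not an amalgamation class and the whole Fra\"iss\'e scheme collapses. (The paper also has to verify, in Proposition \ref{denseprop}, that one can \emph{glue} the prescribed norm on the fresh summand to the existing norm while keeping $\lambda(d_c-\bar\phi(d_c))$ small — this is a nontrivial partial-norm extension argument that your "extend $d$ there freely" elides.)
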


When the group $G$ is moreover unbounded we get the following.
\begin{cor}
Let $G$ be an unbounded countable abelian group such that $G\cong \bigoplus_\Nat G$. Then there is a comeager set of invariant metrics on $G$ such that all of them give rise to the same metric group after the completion which is isometric to the Urysohn space and extremely amenable.
\end{cor}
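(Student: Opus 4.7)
The plan is to combine three comeager sets in the Polish space $\mathcal{M}(G)$ of all invariant metrics on $G$ and exploit Baire category. First, the earlier theorem in the introduction (applied to the unbounded $G$) provides a comeager (dense $G_\delta$) set $A\subseteq\mathcal{M}(G)$ of metrics whose completion is isometric to the Urysohn space. Second, the Melleray--Tsankov result cited at the start of the introduction yields a comeager set $B\subseteq\mathcal{M}(G)$ of metrics with which $G$ is extremely amenable. Third, Theorem~\ref{intro_thm2} supplies, via the assumption $G\cong\bigoplus_\Nat G$, a comeager set $C\subseteq\mathcal{M}(G)$ together with a distinguished metric group $\Gr$ such that the completion of $(G,d)$ equals $\Gr$ for every $d\in C$.

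Since $\mathcal{M}(G)$ is Polish, hence Baire, the intersection $A\cap B\cap C$ is comeager, and in particular nonempty. I would pick any $d\in A\cap B\cap C$: membership in $A$ gives that the completion of $(G,d)$ is isometric to the Urysohn space, and membership in $B$ gives that $(G,d)$ is extremely amenable. Membership in $C$ forces this completion to be the distinguished group $\Gr$. Hence $\Gr$ itself is isometric to the Urysohn space, and it is extremely amenable because it contains $(G,d)$ as a dense extremely amenable subgroup. The comeager set $C$ is then the one asserted by the corollary, with common completion $\Gr$ enjoying both desired properties.

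The only step not handed to us directly by the three quoted results is the passage of extreme amenability from $(G,d)$ to its metric completion $\Gr$: any continuous action of $\Gr$ on a compact Hausdorff space restricts to a continuous action of $G$, which has a fixed point by extreme amenability of $(G,d)$, and by density of $G$ in $\Gr$ together with continuity this fixed point is fixed by all of $\Gr$. This is a standard and routine fact, and I do not anticipate a genuine obstacle anywhere in the argument; the content of the corollary is essentially the observation that the three stated comeager sets must meet.
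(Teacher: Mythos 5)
Your proposal is correct and is essentially the argument the paper intends: the corollary is obtained by intersecting the comeager set from Theorem \ref{intro_thm2} (equivalently Theorem \ref{mainthm}) with the comeager sets from Corollary \ref{corollaryUry} and the Melleray--Tsankov theorem, and noting the intersection is nonempty so the common completion inherits both properties. Your density argument for transferring extreme amenability to the completion is the standard fact the paper implicitly uses, and it is handled correctly.
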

We also derive the extreme amenability of a universal abelian Polish group.
\begin{cor}
The universal abelian Polish group of Shkarin \cite{Sk} and Niemiec \cite{Nie1} is extremely amenable.
\end{cor}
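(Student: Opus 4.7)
The idea is to identify each of the universal abelian Polish groups with the generic completion furnished by the preceding corollary, applied to its canonical countable dense subgroup, and then read off extreme amenability from the corollary.

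For Shkarin's group $\mathbb{S}$, Niemiec \cite{Nie1} identified the canonical countable dense subgroup as $H := \bigoplus_\Nat \Rat/\Int$. This $H$ is unbounded, since $\Rat/\Int$ already contains elements of every finite order, and plainly satisfies $H \cong \bigoplus_\Nat H$. The preceding corollary thus produces a comeager set of invariant metrics on $H$ whose completions are all, up to isometric isomorphism, a single extremely amenable abelian Polish group $\mathbb{G}_H$ whose underlying metric space is isometric to the Urysohn space. It remains to identify $\mathbb{S}$ with $\mathbb{G}_H$. The approach here is to argue via universality: one shows that $\mathbb{G}_H$, by virtue of having underlying metric the Urysohn space and containing the rich group $H$ as a dense subgroup, also enjoys the universality property that characterizes $\mathbb{S}$, namely that every abelian Polish group embeds in it as a closed subgroup. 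Uniqueness of the universal abelian Polish group then identifies $\mathbb{S}$ with $\mathbb{G}_H$, and the extreme amenability of the latter transfers to the former.

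For the author's universal abelian Polish group from \cite{Do1}, the argument is entirely analogous: the construction in \cite{Do1} provides a natural countable dense subgroup which one verifies to be unbounded and to absorb countable direct sums, and the same uniqueness argument identifies the universal group with the corresponding generic completion.

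The main obstacle is precisely the identification of the generic completion with the universal group. The technical core is to establish that $\mathbb{G}_H$ itself enjoys the universality property, which one expects to follow from a back-and-forth argument leveraging both the extension properties built into the genericity of the invariant metric on $H$ and the one-point extension property of the Urysohn space. Once this identification is secured, extreme amenability of the two universal groups is immediate.
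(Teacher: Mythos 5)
Your overall strategy---realize each universal group as the generic completion of its canonical countable dense subgroup and then invoke the extreme amenability of that generic completion---matches the paper's, and the easy parts are fine: the dense subgroup of Shkarin's group is $\bigoplus_\Nat\Rat/\Int$, which is unbounded and infinitely-summed, and extreme amenability passes between a dense subgroup and its completion. The gap is in the identification step, which you correctly flag as the main obstacle but then resolve by an argument that does not work. You propose to show that the generic completion $\mathbb{G}_H$ is universal and then appeal to ``uniqueness of the universal abelian Polish group.'' No such uniqueness holds: universality does not determine a group up to isomorphism, and the paper itself exhibits several pairwise non-isomorphic universal abelian Polish groups (Shkarin's group, the group of \cite{Do1}, and their direct sum). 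Moreover, universality of $\mathbb{G}_H$ cannot be read off from ``Urysohn underlying metric plus a dense copy of $\bigoplus_\Nat\Rat/\Int$''; universality is a statement about the group structure and requires an approximate extension property for finite normed groups, not just metric homogeneity of the underlying space.

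The paper runs the identification in the opposite direction. One writes down the Fra\"iss\'e-type extension property \eqref{isShkarin} for norms on $G_S=\bigoplus_\Nat\Rat/\Int$; this property (i) holds for Shkarin's norm $\lambda_S$ by the definition of the Fra\"iss\'e limit, (ii) determines the completion uniquely up to isometric isomorphism by a back-and-forth argument exactly as for the Gurarij space, and (iii) cuts out a dense $G_\delta$, hence comeager, set $\mathbb{S}\subseteq\mathcal{N}_{G_S}$. Intersecting $\mathbb{S}$ with the comeager set of extremely amenable norms from Theorem \ref{thmMeTs} (and with the comeager sets from Theorem \ref{mainthm} and Corollary \ref{corollaryUry}) yields a norm $\lambda\in\mathbb{S}$ such that $(G_S,\lambda)$ is extremely amenable, whence $\Gr_S\cong\overline{(G_S,\lambda)}$ is extremely amenable; the same scheme with the countably generated free abelian group in place of $G_S$ handles the group of \cite{Do1}. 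To repair your proof you need such a comeager characterization of each universal group up to isometric isomorphism; universality alone will not do.
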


\section{Definitions and preliminaries}
Throughout this paper, since we shall work only with abelian groups, we adopt the additive notation.

A metric $d$ on an abelian group $G$ is invariant if for every $a,b,c\in G$ we have $d(a,b)=d(a+c,b+c)$. A norm on $G$ is a function $\lambda:G\rightarrow \Rea_0^+$ from $G$ to non-negative reals, denoted by $\Rea_0^+$, which attains zero only at $0_G\in G$ and satisfies for every $a,b\in G$, $\lambda(a)=\lambda(-a)$ and $\lambda(a+b)\leq \lambda(a)+\lambda(b)$. There is a one-to-one correspondence between invariant metrics and norms on abelian groups: for a norm $\lambda$, $d_\lambda(a,b):=\lambda(a-b)$ defines an invariant metric, and for an invariant metric $d$, $\lambda(a):=d(a,0)$ defines a norm.

Consider the set of all norms, respectively invariant metrics, on some countable abelian group $G$. It can be viewed as a subset of $\Rea^G$, resp. $\Rea^{G\times G}$. In both cases, one can easily check it is a closed set, thus a Polish space (we refer the reader to \cite{Ke} for facts needed about Polish spaces). It turns out it is more convenient for us to work with norms, rather than invariant metrics, so we shall do so mostly in the sequel.

Let us denote the Polish space of norms on $G$ by $\mathcal{N}_G$. Later, when the group is known from the context, or it is fixed, we shall just write $\mathcal{N}$. Clearly, the space $\mathcal{N}_G$ is homeomorphic with the Polish space of all invariant metrics on $G$ via the formula above.

The content of this paper is to investigate generic properties of the space of norms on countable abelian groups. We recall one important result of Melleray and Tsankov in that direction that we shall apply in our paper. Let us first state the following simple, but relevant for us, definition.

\begin{defin}
We shall call an abelian group $G$ \emph{unbounded} if it either contains an element of infinite order or it contains elements of arbitrarily high finite orders.
\end{defin}

\begin{thm}[Melleray, Tsankov, Theorem 6.4 in \cite{MeTs}]\label{thmMeTs}
Let $G$ be a countable unbounded abelian group. Then the set $\mathbb{E}=\{\lambda\in\mathcal{N}_G:(G,\lambda)\text{ is extremely amenable}\}$ is dense $G_\delta$ in $\mathcal{N}_G$.
\end{thm}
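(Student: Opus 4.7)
The plan is to prove separately that $\mathbb{E}$ is $G_\delta$ and that it is dense in $\mathcal{N}_G$.

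For the $G_\delta$ part, my approach is to invoke a Pestov-style combinatorial characterization of extreme amenability in terms of finite oscillation stability. In this setting, $(G,\lambda)$ (or equivalently its metric completion) is extremely amenable iff for every finite $F\subseteq G$, every $\epsilon>0$, and every cover of $G$ by finitely many sets described using $\lambda$-balls, some piece of the cover admits a $g\in G$ with $g+F$ within $\epsilon$ of that piece. For each fixed triple (a finite $F$, a rational $\epsilon>0$, and a finite cover parametrized by finitely many group elements with rational radii) the corresponding condition depends on $\lambda$ only through its values on a finite subset of $G$ and is therefore open in $\mathcal{N}_G$. Extreme amenability is then a countable intersection of countable unions of such open conditions, hence $G_\delta$.

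For the density part, the key ingredient is Pestov's theorem that countable direct sums $\bigoplus_k \Int/n_k\Int$ equipped with suitably normalized sum-of-Hamming metrics are Levy groups, and so extremely amenable. Given $\lambda_0\in\mathcal{N}_G$ and a basic neighborhood determined by a finite $F\subseteq G$ and a tolerance $\epsilon>0$, the plan is to exploit unboundedness of $G$ to select elements $a_k\in G$ of orders $n_k\to\infty$ spanning a subgroup $H\leq G$ with a Levy-type structure, and then build a norm $\lambda\in\mathcal{N}_G$ such that (i) $|\lambda(x)-\lambda_0(x)|<\epsilon$ for every $x\in F$ and (ii) $\lambda$ dominates the pullback of a Levy metric under a homomorphism $G\to H$. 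Extreme amenability of $(G,\lambda)$ then follows from the fact that a continuous homomorphic image of an extremely amenable group is extremely amenable, together with the observation that a dense subgroup of an extremely amenable topological group is itself extremely amenable.

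The main obstacle is the density step, specifically the reconciliation of two competing constraints: the norm must match $\lambda_0$ on $F$ within $\epsilon$, while simultaneously possessing a global large-scale structure strong enough to deliver concentration of measure on the completion. The unboundedness hypothesis is critical, since a bounded abelian group has only precompact invariant metrics and its completion is profinite, which excludes extreme amenability. The technical heart of the argument is therefore a careful perturbation of $\lambda_0$ that leaves its values on $F$ essentially intact while simultaneously injecting enough Levy-type structure on a complementary part of $G$ to force the Ramsey-theoretic fixed-point property.
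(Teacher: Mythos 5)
First, a remark on scope: the paper does not prove this statement at all --- it is quoted verbatim from Melleray and Tsankov (Theorem 6.4 of \cite{MeTs}) and used as a black box, so there is no internal proof to compare yours against; your attempt has to be judged on its own terms. In the $G_\delta$ step, the entire difficulty is reducing finite oscillation stability, which quantifies over \emph{all} finite covers of $G$, to a countable family of conditions each depending on finitely many values of $\lambda$. Restricting attention to covers ``described using $\lambda$-balls'' is not justified (why should testing such covers suffice?), and even for such covers the condition ``$g+F$ is within $\epsilon$ of a piece'' involves $\lambda$ at infinitely many points, not finitely many as you assert. The standard repair is different: by a compactness argument, oscillation stability is equivalent to the assertion that for every finite $F\subseteq G$, every rational $\epsilon>0$ and every $n$, there is a \emph{finite} $K\subseteq G$ such that every $n$-colouring of $K$ has a colour class $\epsilon$-containing a translate of $F$ inside $K$; for fixed data this is determined by finitely many values of $\lambda$, hence open, and the $G_\delta$ form follows by intersecting over the countably many $(F,\epsilon,n)$.

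The density step contains a directional error that breaks the argument. If $\lambda$ dominates the pullback of a Levy metric under a homomorphism $\pi\colon G\to H$, then $\pi$ is continuous from $(G,\lambda)$ \emph{onto} the Levy group, and the fact you invoke (a continuous homomorphic image with dense range of an extremely amenable group is extremely amenable) transfers extreme amenability in the wrong direction: it would let you deduce extreme amenability of $H$ from that of $(G,\lambda)$, which is the conclusion you want, not a hypothesis. (Compare: $\Int$ with the discrete metric maps continuously and densely onto every monothetic group, yet is not extremely amenable.) To make a Levy-group argument work you must arrange that $(G,\lambda)$ itself --- or a subgroup that is $\lambda$-dense --- carries the concentration structure, e.g.\ that $\lambda$ is bounded \emph{above} by a normalized-Hamming-type metric on such a subgroup, and it is precisely in reconciling this with the constraint $|\lambda-\lambda_0|<\epsilon$ on $F$ that the real work of Melleray and Tsankov lies. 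Two further inaccuracies: an abelian group need not admit a homomorphic retraction onto the subgroup $H$ generated by your chosen elements $a_k$, so the homomorphism $G\to H$ may simply not exist; and your justification for the necessity of unboundedness is false, since a bounded infinite abelian group carries non-precompact invariant metrics (the discrete one, for instance) and groups of exponent $2$ do admit extremely amenable invariant metrics (dense countable subgroups of the Levy group $L_0([0,1],\Int/2\Int)$, and cf.\ the remark in the paper on \cite{Nie2}).
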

We note that Melleray and Tsankov formulated the theorem with invariant metrics rather than norms which is, as noted above, however equivalent.\\
\begin{defin}
Let $G$ be an abelian group and $A\subseteq G$ a symmetric subset, i.e. $A=-A$, containing zero. A \emph{partial norm} on $A$ is a function $\lambda:A\rightarrow \Rea^+_0$ satisfying the following requirements:
\begin{itemize}
\item $\lambda(x)=0$ iff $x=0$, for $x\in A$,
\item $\lambda(x)=\lambda(-x)$ for $x\in A$,
\item $\lambda(x)\leq \sum_{i=1}^n \lambda(x_i)$, where $n$ is arbitrary, $x,x_1,\ldots,x_n\in A$ and $x=\sum_{i=1}^n x_i$.

\end{itemize}
If $\lambda$ satisfies all the conditions except the first one, then it is called a \emph{partial seminorm}. If it satisfies the first and the second condition we call it a \emph{partial pre-norm}.
\end{defin}
\begin{lem}\label{extnorm_lem}
Let $G$ be an abelian group, $A\subseteq G$ some symmetric subset containing zero and $\lambda_A:A\rightarrow\Rea_0^+$ a partial norm on $A$. Then for any subset $B$ with $A\subseteq B\subseteq \langle A\rangle\leq G$, where $\langle A\rangle$ is the subgroup of $G$ generated by $A$, there exists a partial seminorm $\lambda_B:B\rightarrow \Rea^+_0$ which extends $\lambda_A$.

Moreover, if $A$ is finite and $B=\langle A\rangle=G$, then $\lambda_B$ is a norm on $G$ extending $\lambda_A$, and if $\lambda_A$ is additionally rational-valued, then so is $\lambda_B$.
\end{lem}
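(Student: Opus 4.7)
The standard approach is to define the extension by an infimum over all ways of writing an element of $B$ as a sum of elements of $A$, and then verify the three axioms one by one. Explicitly, for $b \in B$ I would set
\[
\lambda_B(b) := \inf\left\{\sum_{i=1}^n \lambda_A(x_i) : n \in \Nat,\ x_1,\ldots,x_n \in A,\ b = \sum_{i=1}^n x_i\right\}.
\]
The infimum runs over a nonempty set: since $B \subseteq \langle A\rangle$ and $A$ is symmetric with $0\in A$, every $b\in B$ admits a representation as a finite sum of elements of $A$. Moreover the trivial choice $n=1$, $x_1=0$ shows $\lambda_B(0)=0$.

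For the three partial seminorm axioms I would proceed as follows. \emph{Extension:} for $a \in A$, the decomposition $a = a$ gives $\lambda_B(a) \leq \lambda_A(a)$, while the reverse inequality $\lambda_A(a) \leq \lambda_B(a)$ is precisely the triangle inequality axiom of the partial norm $\lambda_A$ applied to any other representation $a = \sum x_i$ with $x_i \in A$. \emph{Symmetry:} any decomposition $b = \sum x_i$ gives $-b = \sum (-x_i)$ with the same total $\lambda_A$-weight, using symmetry of $A$ and of $\lambda_A$. \emph{Subadditivity:} given $b = \sum_{i=1}^k b_i$ with $b, b_i \in B$ and $\varepsilon > 0$, pick representations $b_i = \sum_j y_{ij}$ with $y_{ij} \in A$ and $\sum_j \lambda_A(y_{ij}) < \lambda_B(b_i) + \varepsilon/k$; concatenation yields a representation of $b$ of total $\lambda_A$-weight strictly less than $\sum_i \lambda_B(b_i) + \varepsilon$, and letting $\varepsilon \to 0$ gives $\lambda_B(b) \leq \sum_i \lambda_B(b_i)$.

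For the ``in particular'' clause, suppose $A$ is finite and $B = \langle A\rangle = G$; we must upgrade the partial seminorm to an honest norm by verifying positivity on $G \setminus \{0\}$. By finiteness of $A$ and the fact that $\lambda_A$ is a partial \emph{norm}, the quantity $m := \min\{\lambda_A(x) : x \in A \setminus \{0\}\}$ is strictly positive (the case $A = \{0\}$ forces $G=\{0\}$ and is trivial). For nonzero $b \in G$, any decomposition $b = \sum x_i$ with $x_i \in A$ must contain at least one nonzero summand, whose $\lambda_A$-weight is $\geq m$; hence every decomposition has total weight $\geq m$, and so $\lambda_B(b) \geq m > 0$. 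The only mildly delicate step in the whole argument is the $\varepsilon$-bookkeeping in the subadditivity verification; everything else is a direct check.
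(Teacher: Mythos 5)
Your proof is correct and follows essentially the same route as the paper: both define $\lambda_B$ by the identical infimum over decompositions of $b$ into elements of $A$, with the paper leaving the axiom checks as ``direct from the definition'' where you spell them out. The only cosmetic difference is in the ``in particular'' clause, where you establish positivity via the lower bound $m=\min\{\lambda_A(x):x\in A\setminus\{0\}\}>0$, whereas the paper observes that for finite $A$ the infimum is attained as a minimum; both yield the same conclusion.
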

\begin{proof}
Take $b\in B$ and set $$\lambda_B(b)=\inf \{\sum_{i=1}^n \lambda_A(a_i):(a_i)_{i=1}^n\subseteq A,b=\sum_{i=1}^n a_i\}.$$
Since $B\subseteq \langle A\rangle$, there exist $(a_i)_{i=1}^n\subseteq A$ such that $b=\sum_{i=1}^n a_i$. It directly follows from the definition that $\lambda_B$ satisfies all the conditions of a partial seminorm and extends $\lambda_A$. Moreover, if $A$ is finite, then the infimum from the definition of $\lambda_B$ might be replaced by the minimum. It follows that $\lambda_B$ in that case is a partial norm and if $\lambda_A$ was rational-valued, so is now $\lambda_B$.
\end{proof}
\begin{remark}
Notice that $\lambda_B$ is actually the greatest extension of $\lambda_A$ to a partial seminorm on $B$; i.e. if $\lambda:B\rightarrow \Rea^+_0$ is any partial seminorm on $B$ that extends $\lambda_A$, then $\lambda\leq \lambda_B$; and if $A$ is finite, then $\lambda_B$ is the greatest partial norm extending $\lambda_A$.
\end{remark}
When we are given a symmetric subset $A\subseteq G$ containing zero of some abelian group and also some partial pre-norm $\rho':A\rightarrow \Rea_0^+$ then we can get, using the same formula as in the proof above, a greatest partial seminorm determined by $\rho'$; or, in the case $A$ is finite, a greatest partial norm determined by $\rho'$. We state that explicitly in the next fact and omit its easy proof.
\begin{fact}\label{getnormfact}
Let $A\subseteq G$ be a symmetric subset containing zero of some abelian group. Let $\rho':A\rightarrow \Rea_0^+$ be a pre-norm. Then the formula, applied for every $x\in A$, $$\rho(x)=\inf\{\sum_{i=1}^n \rho'(x_i):x=\sum_{i=1}^n x_i, (x_i)_{i-1}^n\subseteq A\}$$ gives a partial seminorm on $A$, resp. partial norm on $A$ if $A$ is finite.
\end{fact}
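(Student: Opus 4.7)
The plan is to verify directly the three defining properties of a partial seminorm from the formula for $\rho$, using only the symmetry of $A$ and the pre-norm axioms of $\rho'$, and then to upgrade to a partial norm in the finite case by a lower-bound argument.

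For symmetry and the vanishing at zero: if $x = \sum_{i=1}^n x_i$ with $x_i \in A$, then $-x = \sum_{i=1}^n (-x_i)$ is a decomposition of $-x$ into elements of $A$ (since $A$ is symmetric) with the same $\rho'$-total (since $\rho'(-x_i) = \rho'(x_i)$). Thus the families of admissible decompositions of $x$ and of $-x$ are in a weight-preserving bijection, giving $\rho(x) = \rho(-x)$. For $\rho(0)=0$, take the one-term decomposition $0 = 0$, which is admissible because $0 \in A$, and note $\rho'(0)=0$.

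For the subadditivity, suppose $x, y_1,\ldots, y_m \in A$ and $x = \sum_{j=1}^m y_j$. Given $\varepsilon > 0$, pick for each $j$ a decomposition $y_j = \sum_{i=1}^{n_j} z_{j,i}$ with $z_{j,i} \in A$ and $\sum_i \rho'(z_{j,i}) \le \rho(y_j) + \varepsilon/m$. Concatenating yields a decomposition $x = \sum_{j,i} z_{j,i}$ with all $z_{j,i} \in A$, so
\[
\rho(x) \le \sum_{j=1}^m \sum_{i=1}^{n_j} \rho'(z_{j,i}) \le \sum_{j=1}^m \rho(y_j) + \varepsilon.
\]
Letting $\varepsilon \to 0$ gives the required inequality. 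This is the one step of the proof that requires any care, and it is exactly the same bookkeeping with infima that appears in Lemma~\ref{extnorm_lem}.

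Finally, suppose $A$ is finite and let $x \in A\setminus\{0\}$. Any admissible decomposition $x = \sum_i x_i$ has at least one $x_i \ne 0$, and since $A$ is finite the quantity $c := \min\{\rho'(a) : a \in A\setminus\{0\}\}$ is a well-defined positive real (positivity using that $\rho'$ is a pre-norm). Hence $\sum_i \rho'(x_i) \ge c$, so $\rho(x) \ge c > 0$, which together with $\rho(0)=0$ promotes $\rho$ from a partial seminorm to a partial norm. There is no real obstacle here; the content of the fact is simply that the infimum construction manufactures subadditivity out of an arbitrary symmetric positive function, at the cost of only the obvious loss in the positivity condition unless $A$ is finite.
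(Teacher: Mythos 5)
Your proof is correct and follows the same route the paper intends (the paper omits the proof, pointing to the identical infimum computation in Lemma~\ref{extnorm_lem}): well-definedness via the trivial decomposition $x=x$, symmetry via negating decompositions, and subadditivity via concatenating $\varepsilon/m$-optimal decompositions. Your finite-case argument, bounding $\rho(x)$ below by $\min\{\rho'(a):a\in A\setminus\{0\}\}>0$, is a clean (and arguably tidier) substitute for the paper's remark that the infimum is attained when $A$ is finite.
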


In the text below we shall be interested in the possibilities how to extend partial norms. We recall the notion of a Kat\v etov function on a metric space which corresponds, in the terminology of continuous model theory, to the quantifier-free type over a metric space. Let $X$ be a metric space and $f:X\rightarrow \Rea^+_0$ a function. It is called \emph{Kat\v etov} if it satisfies for every $x,y\in X$ $$|f(x)-f(y)|\leq d_X(x,y)\leq f(x)+f(y).$$ A Kat\v etov function $f$ without zeros can be then viewed as a prescription of distances of a new point to the points of $X$ in the sense that we can define a one-point extension $X\cup\{x_f\}$ and define the distance of $x_f$ to a point $y\in X$ as $f(y)$.\\

We start with some algebraic definition of elements in abelian groups that have the potential to realize Kat\v etov functions. Fix $G$ an abelian group and $A\subseteq G$ a subset. We recall that \emph{an oriented Cayley graph} $C_G^A$ of $G$ with respect to the set $A$ is an oriented graph such that
\begin{itemize}
\item the set of vertices is $G$;
\item the set of oriented edges is the set $\{(g,g+a):g\in G,a\in A\}$.
\end{itemize}
\emph{An oriented path} from $g\in G$ to $h\in G$ is a sequence $g=x_0,\ldots,x_n=h$ from $G$ such that for every $0\leq i<n$, $(x_i,x_{i+1})$ is an oriented edge. In other words, there are elements $a_1,\ldots,a_n\in A$ such that $x_i=x_0+\sum_{j=1}^i a_j$, for $1\leq j\leq n$. The length of this path is $n$.
\begin{defin}
Let now $G$ be again an abelian group and $A\subseteq G$ a symmetric subset (containing zero). Let $g\in G\setminus A$. We define the \emph{distance} of $g$ from $A$ in $G$, denoted by $\dist_G(g,A)$, as the length of the shortest oriented path between $g$ and $A$ in $C_G^{A\cup\{g\}}$.
\end{defin}
Now let $G$ be an abelian group, $A\subseteq G$ a finite symmetric subset containing zero and $\lambda_A$ a partial norm on $A$. Set $\overline{A}=\{a-b:a,b\in A\}=A-A=A+A$. Note that $A\subseteq \overline{A}$ and that $\overline{A}$ is again a finite symmetric subset containing zero. Let $\bar{\lambda}_A$ be the greatest extension of $\lambda_A$ into a partial norm on $\overline{A}$ guaranteed by Lemma \ref{extnorm_lem}, and note that it induces a metric $d_A$ on $A$ defined as $d_A(a,b)=\bar{\lambda}_A(a-b)$ for $a,b\in A$. Let $f:A\rightarrow \Rea^+$ be a Kat\v etov function with respect to the metric $d_A$. We are now interested whether it is possible to find an element $g\in G\setminus A$ and a partial norm $\lambda$ on $\overline{A\cup \{g\}}$ which extends $\bar{\lambda}_A$ and such that for every $a\in A$ we have $f(a)=\lambda(g-a)$.

Set $m=\min\{\min \bar{\lambda}_A(\overline{A}\setminus\{0\}),\min f(A)\}$ and\\ $M=\max\{\max \bar{\lambda}_A(\overline{A}),\max f(A)\}$. Then we have the following proposition.
\begin{prop}\label{Katetovext_prop}
Under the setting above, suppose that there exists $g\in G\setminus \overline{A}$ such that $\dist(g,\overline{A})>2\frac{M}{m}$. Then there exists a partial norm $\lambda$ on $\{g-a,a-g:a\in A\}\cup \overline{A}$ which extends $\bar{\lambda}_A$ and such that for every $a\in A$ we have $f(a)=\lambda(g-a)$.
\end{prop}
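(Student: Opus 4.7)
The natural plan is to prescribe the desired values on $D := \{g - a, a - g : a \in A\} \cup \bar{A}$ by setting
$$\rho'(y) := \bar{\lambda}_A(y) \text{ for } y \in \bar{A}, \qquad \rho'(g - a) := \rho'(a - g) := f(a) \text{ for } a \in A,$$
and to take for $\lambda$ the partial norm on the finite set $D$ produced from the pre-norm $\rho'$ by Fact \ref{getnormfact}. The task is then to show $\lambda = \rho'$, which reduces to verifying that for every $y \in D$ and every decomposition $y = \sum_{i=1}^n x_i$ with $x_i \in D$ one has $\rho'(y) \leq \sum_i \rho'(x_i)$. Observe first that $\rho'$ is well-defined and the new elements $g \pm a$ lie genuinely outside $\bar A$: any identification among them or with an element of $\bar A$ would produce an oriented path of length at most $1$ from $g$ to $\bar A$, contradicting $\dist(g,\bar A) > 2M/m \geq 2$.

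Fix such a decomposition and classify the summands: let $P$, $Q$, $R$ be the numbers of summands of types $g - a_j^P$, $a_j^Q - g$ and $b_j^R \in \bar A$ respectively, so $y = (P-Q) g + \alpha$, where $\alpha = -\sum a_j^P + \sum a_j^Q + \sum b_j^R$ is a sum of $P + Q + 2R$ elements of $A$. The three cases $y \in \bar A$, $y = g-c$, $y = c-g$ force respectively the integers $K = P-Q$, $P-Q-1$, $P-Q+1$ to satisfy $Kg \in \langle A\rangle$. In the \emph{balanced} case $K = 0$, pair the $+g$- with the $-g$-terms arbitrarily (singling out, in Cases B and C, one unpaired $\pm g$-term carrying some $a^* \in A$); each pair collapses to an element $a_{\sigma(j)}^Q - a_j^P \in \bar A$, and the desired inequality follows from the maximality of $\bar\lambda_A$ among its extensions, the Kat\v etov bound $\bar\lambda_A(a-b) \leq f(a)+f(b)$ applied term-by-term, and, in Cases B and C, the other Kat\v etov inequality $f(c) \leq f(a^*) + \bar\lambda_A(c - a^*)$.

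In the \emph{unbalanced} case $K \neq 0$, the decomposition itself exhibits an oriented path from $g$ to $\bar A$ in the Cayley graph of $G$ with generators $\{g\}\cup A$: one walks from $g$ to $|K| g$ using $|K| - 1$ $+g$-steps (replacing the defining equation by its negative when $K<0$), and then realises the remaining element of $\langle A\rangle$ as an explicit sum of $P+Q+2R$ generators from $A$ to land in $\bar A$. A direct subcase-by-subcase count shows that the length of this path is bounded above by $2\max(P,Q) + 2R$, so the hypothesis $\dist(g,\bar A) > 2M/m$ forces $P + Q + R > M/m$, whence
$$\sum_{i=1}^n \rho'(x_i) \;\geq\; m(P+Q+R) \;>\; M \;\geq\; \rho'(y),$$
establishing the required inequality in the unbalanced case as well.

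The main obstacle I anticipate is the bookkeeping for the unbalanced case: one must verify, for each of the six subcases (three choices of $y$ times two signs of $K$), that the decomposition indeed traces a legal oriented path of length strictly less than $2M/m$, and that the coarse estimate $\sum_i \rho'(x_i) \geq m(P+Q+R)$ is sharp enough to close the gap. The balanced case, by contrast, is a straightforward combination of the pairing trick with the two Kat\v etov inequalities.
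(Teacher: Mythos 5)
Your proof is correct and follows essentially the same strategy as the paper's: prescribe the obvious values, then verify subadditivity by splitting decompositions according to the net multiplicity of $g$, handling the balanced case by pairing $g-a$ with $a'-g$ terms and invoking the two Kat\v etov inequalities, and the unbalanced case by reading off an oriented path from $g$ to $\bar A$ whose length bounds force $\sum\rho'(x_i)\geq m(P+Q+R)>M\geq\rho'(y)$. The paper organizes this as a proof by contradiction (first deducing $n\leq M/m$, then deriving $\dist(g,\bar A)\leq 2n$), but the content is identical.
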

\begin{proof}
Set $B=\overline{A}\cup\{g-a,a-g:a\in A\}$. Note that $B$ is also a symmetric set containing zero. Define $\lambda$ on $B$ as follows: for any $b\in B$ set $$\lambda(b)=\begin{cases} \bar{\lambda}_A(b) & b\in \overline{A},\\
f(x) & \text{if }b=g-x,\text{ or }b=x-g.\\
\end{cases}$$

Notice that there is no collision in the definition above since the sets $\overline{A}$ and $\{g-a,a-g:a\in A\}$ are disjoint. Indeed, if for some $a\in A$ and $\bar{a}\in\overline{A}$ we have $g-a=\bar{a}$, then there is an oriented edge between $g$ and $\bar{a}$ in $C_G^{\overline{A}\cup\{g\}}$, and that is a contradiction with $\dist(g,\overline{A})>2\frac{M}{m}\geq 2$.

Now it suffices to check that $\lambda$ is a partial norm on $B$. The first two conditions of the definition of a partial norm are easily checked. We claim that also the last condition is satisfied. Suppose otherwise. Then there are $b\in B$ and $(b_i)_{i=1}^n\subseteq B\setminus\{0\}$ such that $b=\sum_{i=1}^n b_i$ and $\lambda(b)>\sum_{i=1}^n \lambda(b_i)$. We have that $n<\frac{M}{m}$ because $M\geq \lambda(b)>n\cdot m$.

Moreover, without loss of generality we may also suppose that for no $i\neq j\leq n$ we have $b_i=g-a_i$ and $b_j=a_j-g$, for some $a_i,a_j\in A$. Indeed, suppose otherwise. Then since $g-a_i+a_j-g=a_j-a_i\in \overline{A}$ and the function $f$ is Kat\v etov we have that $\lambda(g-a_i)+\lambda(a_j-g)=f(a_i)+f(a_j)\geq d_A(a_j,a_i)=\bar{\lambda}_A(a_j-a_i)$. Thus we can replace the pair $b_i,b_j$ by $a_j-a_i$.\\

\noindent {\bf Case 1:} $b\in \overline{A}$. We claim that there must be some $i\leq n$ such that $b_i$ is equal to $g-a$ or $a-g$ for some $a\in A$. Indeed, otherwise we get into a contradiction since $\bar{\lambda}_A$ is a partial norm. By the argument in the paragraph above there is a single sign $\varepsilon\in\{1,-1\}$ such that for every $b_i\notin \overline{A}$ we have $b_i=\varepsilon\cdot g-\varepsilon\cdot a_i$ for some $a_i\in A$. Suppose that $\varepsilon=1$, the other case is analogous. It follows that there is some $0<k\leq n$ such that $b=k\cdot g+\sum_{i=1}^n c_i$, where $(c_i)_{i=1}^n\subseteq \overline{A}$. Thus there is an oriented path from $g$ to $b$ in $C_G^{\overline{A}\cup\{g\}}$ of length $k-1+n< 2n$ which contradicts that $\dist (g,\overline{A})>2\frac{M}{m}>2n$. Indeed, set $e_0=0$ and $e_i=g$, for $1\leq i\leq k-1$, and $e_j=c_{j-k+1}$, for $k\leq j\leq k-1+n$. Then $(x_i=g+\sum_{j=0}^i e_j)_{i=0}^{k-1+n}$ is the desired path.\\

\noindent {\bf Case 2:} $b=g-a$, or $b=a-g$, for some $a\in A$. Let us say $b=g-a$, the other case is analogous. Suppose at first that for all $i\leq n$ we have $b_i\in \overline{A}$. Since $g-\sum_{i=1}^n b_i=a$, if we set $x_0=g$ and $x_j=g-\sum_{i=1}^j b_i$, for $1\leq j\leq n$, we get that $x_0,\ldots,x_n$ is an oriented path from $g$ to $a$ in $C_G^{\overline{A}\cup\{g\}}$ which is again a contradiction with $\dist(g,\overline{A})>2\frac{M}{m}$.

Thus we suppose that for some $i\leq n$ we have $b_i\notin \overline{A}$. As in Case 1 we may suppose that there is a single sign $\varepsilon\in\{1,-1\}$ such that for every $b_i\notin \overline{A}$ we have $b_i=\varepsilon\cdot g-\varepsilon\cdot a_i$ for some $a_i\in A$.

Suppose at first that $\varepsilon=-1$. Then there is some $1<k\leq n+1$ such that $k\cdot g=\sum_{i=1}^n c_i$, where $(c_i)_{i=1}^n\subseteq \overline{A}$. Thus we get an oriented path of length $k-1+n-1< 2n$ from $g$ to $c_n$ in $C_G^{\overline{A}\cup\{g\}}$ which again contradicts that $\dist (g,\overline{A})>2\frac{M}{m}$.

Now suppose that $\varepsilon=1$. Suppose without loss of generality that $b_1=g-a_1$, for some $a_1\in A$. Then, since $n>1$, $\sum_{i=2}^n b_i=a_1-a\in\overline{A}$. If there are indices $2\leq i\leq n$ such that $b_i=g-a_i$, for some $a_i\in A$, then there is some $0<k\leq n-1$ so that $a_1-a=k\cdot g+\sum_{i=1}^{n-1} c_i$, where $(c_i)_{i=1}^{n-1}\subseteq \overline{A}$. As above, this again leads to a contradiction with $\dist(g,\overline{A})> 2\frac{M}{m}$.

Thus suppose that for every $2\leq i\leq n$ we have $b_i\in\overline{A}$. Then we claim that we may suppose that $n=2$. Indeed, we have that $\sum_{i=2}^n b_i=a_1-a\in\overline{A}$ and thus $\bar{\lambda}_A(a_1-a)\leq \sum_{i=2}^n \bar{\lambda}_A(b_i)$. So we are left with the case that $$\lambda(g-a)=f(a)>\lambda(g-a_1)+\lambda(a_1-a)=f(a_1)+\bar{\lambda}_A(a_1-a)=$$ $$f(a_1)+d_A(a_1,a),$$ which is again a contradiction with the fact that $f$ is Kat\v etov.
\end{proof}

\begin{lem}\label{unbddgrp_lem}
Let $G$ be an unbounded abelian group, $A\subseteq G$ a finite symmetric subset containing zero and $R>0$ a real number. Then there exists $g\in G$ such that $\dist(g,A)>R$.
\end{lem}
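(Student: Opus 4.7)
The plan is to translate the condition $\dist(g,A) > R$ into a purely algebraic statement about $g$, then produce such a $g$ separately in each of the two cases in the definition of unboundedness.

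I first unpack the definition. An oriented path of length $k$ from $g$ to some $a \in A$ consists of $m \geq 0$ edges labelled by the generator $g$ together with $k-m$ edges labelled by elements of $A$, yielding
\[ a = (m+1)g + \sum_{i=1}^{k-m} a_i \qquad \text{with } a_i \in A. \]
Because $A$ is symmetric and contains $0$, rearranging gives $(m+1)g \in A^{+(k-m+1)}$, where $A^{+\ell}$ denotes the set of sums of at most $\ell$ elements of $A$. Writing $j = m+1$, the condition $\dist(g,A) > R$ becomes $jg \notin A^{+(R+2-j)}$ for every $j = 1, 2, \ldots, R+1$; note each $A^{+\ell}$ is finite since $A$ is.

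Now I split on the form of unboundedness. If $G$ contains an element $z$ of infinite order, then for each fixed $j \in \{1,\ldots,R+1\}$ the multiples $jNz$ ($N \in \Int$) are pairwise distinct, so only finitely many $N$ force $jNz$ into the finite set $A^{+(R+2-j)}$; choosing $N$ outside the finite union of bad sets over $j$ and setting $g := Nz$ ends the case. Otherwise $G$ has elements of arbitrarily large finite order but no element of infinite order, and then $\langle A \rangle$ is a finitely generated torsion abelian group, hence finite of some order $M$. Picking $z \in G$ with $\mathrm{ord}(z) > (R+1)M$, the intersection $\langle z \rangle \cap \langle A \rangle$ has order dividing $M$, so $z + \langle A \rangle$ has order greater than $R+1$ in $G/\langle A \rangle$; therefore $jz \notin \langle A \rangle \supseteq A^{+\ell}$ for every $1 \leq j \leq R+1$ and every $\ell$, and $g := z$ works.

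The only delicate point is the initial algebraic translation: one has to remember that $+g$ is a Cayley generator while $-g$ is not, so the obstruction to a short path is a one-sided condition on the positive multiples $jg$ rather than on $\pm jg$. After that, both cases amount to avoiding a fixed finite set, either in $G$ itself or in the finite quotient $G/\langle A\rangle$.
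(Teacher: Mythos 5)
Your proof is correct and takes essentially the same route as the paper: both arguments reduce $\dist(g,A)>R$ to the purely algebraic condition that finitely many positive multiples $j\cdot g$ avoid a fixed finite set of short sums of elements of $A$, and then split according to torsion. The only cosmetic difference is that the paper cases on whether that finite set itself contains an element of infinite order and settles the torsion case by contradiction (otherwise every element of $G$ would have order at most $R\cdot N$), whereas you case on $G$ and argue the torsion case directly by passing to the finite quotient $G/\langle A\rangle$ and picking an element of large order there.
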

\begin{proof}
Set $B=\{n_1\cdot a_1+\ldots+n_i\cdot a_i:a_1,\ldots,a_i\in A,n_1,\ldots,n_i\geq 0,n_1+\ldots+n_i\leq R\}$. Note that $B$ is again finite symmetric and containing zero. It suffices to show that there is $g\in G$ such that $n\cdot g\notin B$, for every $0< n\leq R$. Indeed, suppose we have found such $g\in G$, yet still $\dist(g,A)\leq R$. Then there exists a sequence $(c_i)_{i=1}^n\subseteq A\cup\{g\}$, where $n\leq R$, such that $g+\sum_{i=1}^n c_i\in A$. Without loss of generality, we may suppose that there is some $1\leq j\leq n+1$ such that $c_i=g$ if and only if $i<j$. Then $b=\sum_{i=j}^n c_i\in B$ and $j\cdot g=-b$, a contradiction.

If there is $b\in B$ such that $b$ has infinite order, then it clearly suffices to take $N\cdot b$, for $N$ sufficiently large, as $g$. So suppose that every $b\in B$ has finite order. Let $N$ be the maximum of orders of elements from $B$. Suppose there is no such $g\in G$, thus for every $g\in G$ there are $n\leq R$ and $b\in B$ such that $n\cdot g=b$. However, then the order of every $g\in G$ is bounded by $R\cdot N$. That is a contradiction with unboundedness of $G$.
\end{proof}
\section{Groups isometric to the rational Urysohn space}
We recall here that the Urysohn universal metric space $\mathbb{U}$ is the unique Polish metric space (i.e. complete and separable) containing isometrically every separable metric space (containment of every finite metric space is enough) and satisfying the property that a partial isometry between two finite subsets extends to an autoisometry of the whole space. It was constructed by Urysohn in \cite{Ur}. We refer to Chapter 5 in \cite{Pe} for more information about this space.

The Urysohn space has one distinguished countable dense set which is called \emph{the rational Urysohn space} and denoted by $\Rat\mathbb{U}$. It is the unique countable metric space with rational distances that contains isometrically every finite rational metric space and again has the property that any partial isometry between two finite subsets extends to an autoisometry of the whole space. The following well-known fact gives another characterization of $\Rat\mathbb{U}$ which we will use.
\begin{fact}\label{isUryfact}
Let $X$ be a countable metric space with rational distances. Then $X$ is isometric to $\Rat\mathbb{U}$ iff for every finite subset $F\subseteq X$ and every rational Kat\v etov function $f:F\rightarrow \Rat$ there exists $x\in X$ realizing $f$, i.e. ${\forall y\in F (d(x,y)=f(y))}$.
\end{fact}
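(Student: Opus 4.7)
The plan is to prove both directions by the standard back-and-forth method, treating the Katětov realization property as the ``one-point extension'' axiom that generates the full finite extension property of $\Rat\mathbb{U}$.

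For the forward implication, suppose $X \cong \Rat\mathbb{U}$, and let $F \subseteq X$ be finite and $f : F \to \Rat$ a rational Katětov function. The Katětov condition $|f(x)-f(y)| \le d(x,y) \le f(x)+f(y)$ is exactly what is needed to turn $F \cup \{*\}$ (where $*$ is a formal new point with $d(*,y):=f(y)$) into a finite rational metric space. Because $\Rat\mathbb{U}$ enjoys the finite extension property, it is in particular universal for finite rational metric spaces, so there is an isometric embedding $\psi : F \cup \{*\} \to X$. Then $\psi \restriction F : F \to \psi(F)$ is a partial isometry of $X$ between finite sets; applying the extension property again, it extends to an auto-isometry $\Psi$ of $X$. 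The point $x := \Psi^{-1}(\psi(*))$ then satisfies $d(x,y) = f(y)$ for every $y \in F$, as required.

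For the backward implication, assume $X$ has the Katětov realization property. I would prove $X \cong \Rat\mathbb{U}$ by a back-and-forth between $X$ and $\Rat\mathbb{U}$. Fix enumerations $X = \{x_n\}_{n\in\Nat}$ and $\Rat\mathbb{U} = \{u_n\}_{n\in\Nat}$ and build an increasing chain of finite partial isometries $\phi_n : A_n \to B_n$ with $A_n \subseteq X$ and $B_n \subseteq \Rat\mathbb{U}$, alternately forcing $x_n \in A_n$ and $u_n \in B_n$. To add a new point $x_{n+1}$ on the $X$-side, define $f : B_n \to \Rat$ by $f(\phi_n(a)) := d(x_{n+1},a)$; since $\phi_n$ is an isometry, $f$ is Katětov on $B_n$, so by the Katětov property of $\Rat\mathbb{U}$ there is $v \in \Rat\mathbb{U}$ realizing $f$, and we set $\phi_{n+1}(x_{n+1}) := v$. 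For the symmetric step (adding $u_{n+1}$ to the range), we pull back a Katětov function on $A_n$ and use the assumed Katětov realization property of $X$ to find the required preimage. The union $\phi = \bigcup_n \phi_n$ is then a bijective isometry $X \to \Rat\mathbb{U}$.

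The argument is essentially routine once the setup is right; the only point requiring even a moment's attention is the verification that pulling back a Katětov function through a partial isometry yields a Katětov function on the other side, which is immediate from the definition. No step here looks like a genuine obstacle, which is consistent with the author's decision to label the statement merely as a \emph{fact} whose proof can be omitted.
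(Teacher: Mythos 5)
Your proof is correct and is the standard argument; the paper states this as a well-known fact and offers no proof at all, so there is nothing to compare against. The only point worth a footnote is that your forward direction invokes universality of $\Rat\mathbb{U}$ for finite rational metric spaces, which is part of the standard characterization but does not literally follow from the homogeneity property alone as the paper phrases it --- this is standard and harmless, and the rest (the back-and-forth and the pull-back of Kat\v etov functions through partial isometries) is exactly as it should be.
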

\begin{lem}\label{ratpartnorm}
Let $G$ be an abelian group, $F\subseteq G$ a finite symmetric subset containing zero and let $\rho$ be a partial norm on $F$. Then for any $\varepsilon>0$ there exists a rational partial norm $\rho_R$ on $F$ such that  for any $f\in F$ we have $|\rho(f)-\rho_R(f)|<\varepsilon$.
\end{lem}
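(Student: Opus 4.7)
The plan is to produce $\rho_R$ in two steps: first perturb $\rho$ upward by a small uniform amount $\delta$ on $F\setminus\{0\}$ (keeping the value at $0$) to create slack in every nontrivial triangle-like inequality, and then round each value down to a nearby rational. The upward shift is there to absorb the rounding error even on constraints that are tight under $\rho$.

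More concretely, I would fix any $\delta\in(0,\varepsilon/2)$ and set $\rho'(0)=0$, $\rho'(f)=\rho(f)+\delta$ for $f\in F\setminus\{0\}$. A direct check shows that $\rho'$ is still a partial norm on $F$: given any decomposition $f=\sum_{j=1}^k f_{i_j}$ with $f_{i_j}\in F\setminus\{0\}$ (we may drop zero summands), one has $\rho'(f)=\rho(f)+\delta\leq \sum_j \rho(f_{i_j})+k\delta=\sum_j \rho'(f_{i_j})$. Then, for each $f\in F\setminus\{0\}$, I would pick a rational $r_f\in(\rho'(f)-\delta/2,\rho'(f))$, choosing them symmetrically so that $r_{-f}=r_f$ (possible since $\rho'(-f)=\rho'(f)$), and set $\rho_R(f):=r_f$, $\rho_R(0):=0$.

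The approximation bound is then immediate: $|\rho(f)-\rho_R(f)|\leq |\rho(f)-\rho'(f)|+|\rho'(f)-r_f|<\delta+\delta/2<\varepsilon$. The remaining verification is that $\rho_R$ really is a partial norm. Positivity on nonzero elements and symmetry are built in. For the triangle-like inequality on a decomposition $f=\sum_{j=1}^k f_{i_j}$ with $f_{i_j}\in F\setminus\{0\}$, the case $k=1$ is tautological (then $f=f_{i_1}$), and for $k\geq 2$ the estimate
$$\rho_R(f)<\rho(f)+\delta\leq \sum_{j=1}^k\rho(f_{i_j})+k\delta/2<\sum_{j=1}^k r_{f_{i_j}}=\sum_{j=1}^k \rho_R(f_{i_j})$$
does the job: the middle inequality uses $k\geq 2$ together with the triangle inequality for $\rho$, and the last one uses $r_{f_{i_j}}>\rho(f_{i_j})+\delta/2$.

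The main obstacle is that the partial norm condition is a priori an \emph{infinite} family of linear constraints on the finitely many values $\{\rho(f):f\in F\}$, and some of these constraints can be tight under $\rho$, so a naive rounding to rationals can violate them in the wrong direction. The uniform upward shift is designed precisely to convert every length-$k$ constraint (with $k\geq 2$) into one with slack proportional to $k$; this slack simultaneously absorbs the rounding error at short decompositions and makes all sufficiently long decompositions automatically satisfied.
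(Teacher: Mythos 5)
Your proof is correct, and it takes a slightly different route from the paper's. The paper orders $F$ as $f_1,\dots,f_n$ by decreasing $\rho$-value and chooses rational values $r_i$ whose upward perturbations $r_i-\rho(f_i)$ are \emph{strictly increasing in $i$} (so elements of smaller norm are perturbed more); the triangle-type inequality is then verified by a case split on whether some summand $f_{i_j}$ satisfies $\rho(f_{i_j})\geq\rho(f_i)$ (trivial case) or all summands have strictly smaller norm, in which case any single summand's perturbation already exceeds that of the target. Your argument replaces this rank-dependent perturbation with a uniform upward shift by $\delta$ followed by a downward rational rounding by less than $\delta/2$, so every nonzero element ends up perturbed upward by an amount in $(\delta/2,\delta)$; the case split is then on the length $k$ of the decomposition, with $k=1$ tautological and $k\geq 2$ handled because the accumulated slack $k\delta/2\geq\delta$ beats the single perturbation of the left-hand side. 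Both arguments are short and complete (your checks of symmetry, positivity, and the reduction to decompositions with nonzero summands are all in order); yours is arguably a bit more transparent since it needs no ordering of $F$ and no case analysis on which summand dominates, while the paper's version has the mild extra feature that all perturbations are one-sided of size at most $\varepsilon$ relative to $\rho$ with equal values preserved. Either way the quantitative conclusion $|\rho(f)-\rho_R(f)|<\varepsilon$ is the same.
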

\begin{proof}
Enumerate $F$ as $(f_i)_{i=1}^n$ such that for $i<j\leq n$ we have $\rho(f_i)\geq \rho(f_j)$, and $f_n=0$. Let $\delta=\min\{\varepsilon,\min\{|\rho(f)-\rho(g)|:f,g\in F,\rho(f)\neq\rho(g)\}\}$. Moreover, choose an (not necessarily strictly) increasing sequence of positive real numbers $(r_i)^{n-1}_{i=1}$ such that
\begin{itemize}
\item for any $i,j<n$, $\rho(f_i)=\rho(f_j)$ if and only if $r_i=r_j$;
\item for any $i<n$ we have $r_i<\delta$ and $\rho(f_i)+r_i\in\Rat$.
\end{itemize}
Now for $i<n$ we define $\rho_R(f_i)=\rho(f_i)+r_i$, and $\rho_R(f_n)=0$. We claim $\rho_R$ is as desired. Clearly, it is rational and for any $i\leq n$ we have $\varepsilon>r_i=\rho_R(f_i)-\rho(f_i)\geq 0$. If we check that $\rho_R$ is a partial norm then we will be done.

First, for any $i\leq n$ we have $\rho_R(f_i)=0$ if and only if $f_i=f_n=0$, and also for any $i<j<n$, if $f_i=-f_j$, then $\rho_R(f_i)=\rho_R(f_j)$. So it remains to check that for any $i_1,\ldots,i_k,i< n$ such that $f_i=f_{i_1}+\ldots+f_{i_k}$ we have $\rho_R(f_i)\leq \rho_R(f_{i_1})+\ldots+\rho_R(f_{i_k})$. Notice that we have $\rho_R(f_1)\geq \ldots\geq\rho_R(f_n)=0$ since for $j<l\leq n$, $\rho(f_j)=\rho(f_l)$ if and only if $\rho_R(f_j)=\rho_R(f_l)$, and otherwise we have $\rho(f_j)>\rho(f_l)$, so $\rho_R(f_l)=\rho(f_l)+r_l\leq \rho(f_j)<\rho_R(f_j)$. So if for some $j\leq k$ we have that $i_j\leq i$, then $\rho_R(f_i)\leq \rho_R(f_{i_j})$ and we are done. Otherwise, for all $j\leq k$ we have $i<i_j$. Then since $\rho_R(f_i)-\rho(f_i)=r_i\leq r_{i_1}=\rho_R(f_{i_1})-\rho(f_{i_1})$, so $$\rho_R(f_i)=\rho(f_i)+r_i\leq r_{i_1}+\sum_{j=1}^k \rho(f_{i_j})\leq \sum_{j=1}^k \rho_R(f_{i_j}).$$
\end{proof}
\begin{thm}\label{Urysohnnorms}
Let $G$ be a countable unbounded abelian group. Then there exists a norm $\lambda$ on $G$ such that $(G,\lambda)$ is isometric to the rational Urysohn space.

Moreover, the set of all norms on $G$ which make $G$ isometric to the rational Urysohn space is dense.
\end{thm}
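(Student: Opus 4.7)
The plan is a Fra\"iss\'e-style back-and-forth construction, aiming at the extension criterion of Fact \ref{isUryfact}. I first reduce existence to density: a basic open neighborhood of any $\lambda_0 \in \mathcal{N}_G$ is determined by a finite symmetric set $F \ni 0$ and some $\varepsilon > 0$, so after replacing $F$ by $\bar{F}$ and applying Lemma \ref{ratpartnorm} to the restriction of $\lambda_0$ to $\bar{F}$, I obtain a rational partial norm $\rho_0$ on $\bar{F}$ within $\varepsilon$ of $\lambda_0$ pointwise on $\bar{F}$ (and in particular on $F$). It then suffices to extend $\rho_0$ to a rational norm $\lambda$ on $G$ with $(G, \lambda) \cong \Rat\mathbb{U}$, since such a $\lambda$ lies in the chosen neighborhood of $\lambda_0$.

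Enumerate $G = \{g_k : k \in \Nat\}$ with $g_0 = 0$, and fix an enumeration $(F_n, f_n)_{n \in \Nat}$, with each pair repeated infinitely often, of all pairs where $F_n \subseteq G$ is finite and $f_n : F_n \to \Rat^+$. I build inductively a chain of finite symmetric sets $A_0 := \bar{F} \subseteq A_1 \subseteq \ldots$ with $A_n = \bar{A}_n$, and rational partial norms $\rho_n$ on $A_n$ extending one another, starting from $\rho_0$. At odd step $2n+1$ I adjoin $g_n$: set $A_{2n+1} := \overline{A_{2n} \cup \{g_n, -g_n\}}$ and let $\rho_{2n+1}$ be the greatest extension of $\rho_{2n}$ provided by Lemma \ref{extnorm_lem}, which is rational because the defining infimum is a minimum over finitely many rational sums. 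At even step $2n+2$ I consult the pair $(F_n, f_n)$: if $F_n \subseteq A_{2n+1}$ and $f_n$ is Kat\v etov with respect to the metric induced on $F_n$ by $\rho_{2n+1}$, I set $M$ and $m$ as in Proposition \ref{Katetovext_prop}, use Lemma \ref{unbddgrp_lem} to find $g \in G$ at Cayley distance greater than $2M/m$ from $A_{2n+1}$, and apply Proposition \ref{Katetovext_prop} to extend $\rho_{2n+1}$ to a rational partial norm on $A_{2n+1} \cup \{g - a, a - g : a \in F_n\}$ realizing $f_n$ at $g$; otherwise I take $\rho_{2n+2} := \rho_{2n+1}$. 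I always close under subtraction via Lemma \ref{extnorm_lem}, preserving rationality.

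Setting $\lambda := \bigcup_n \rho_n$ yields a rational norm on $\bigcup_n A_n = G$ extending $\rho_0$, and hence lying in the prescribed neighborhood of $\lambda_0$. To verify $(G, \lambda) \cong \Rat\mathbb{U}$ I apply Fact \ref{isUryfact}: given a finite $F \subseteq G$ and a rational Kat\v etov function $f : F \to \Rat^+$ with respect to $d_\lambda$, choose $N$ with $F \subseteq A_N$; since $\lambda$ extends $\rho_N$, the Kat\v etov condition transfers to $\rho_N$ and thereafter to every $\rho_n$ with $n \geq N$, so by infinite repetition of $(F, f)$ in the enumeration the pair becomes valid and is processed at some later even step, yielding the required realizer. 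The principal obstacle is the bookkeeping that guarantees every eventually valid pair is handled; this is a standard priority argument made possible by the infinite repetition of each pair in the enumeration. A secondary but essential check is preservation of rationality at every step, which is immediate from the explicit formulas in Lemmas \ref{extnorm_lem}, \ref{ratpartnorm} and Proposition \ref{Katetovext_prop}.
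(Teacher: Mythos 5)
Your overall strategy is the same as the paper's: a back-and-forth construction alternating between exhausting $G$ and realizing rational Kat\v etov functions via Lemma \ref{unbddgrp_lem} and Proposition \ref{Katetovext_prop}, with Lemma \ref{ratpartnorm} handling density. But there is a genuine gap at your odd step. You adjoin $g_n$ by setting $A_{2n+1}=\overline{A_{2n}\cup\{g_n,-g_n\}}$ and invoking Lemma \ref{extnorm_lem} to get ``the greatest extension'' of $\rho_{2n}$. That lemma only extends a partial norm from $A$ to sets $B$ with $A\subseteq B\subseteq\langle A\rangle$; when $g_n\notin\langle A_{2n}\rangle$ the defining infimum is over the empty set and the formula gives no value at all. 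Worse, you cannot just assign $g_n$ an arbitrary value: even when $g_n\notin\langle A_{2n}\rangle$ there may be a torsion-type relation $m\cdot g_n=f\in\langle A_{2n}\rangle$, which forces $m\cdot\rho(g_n)\geq\rho(f)$ (take $G=\Int$, $A_{2n}=\{0,\pm2\}$ with $\rho(2)=1$ and $g_n=1$: any choice with $\rho(1)<1/2$ violates the partial norm axioms). This is exactly why the paper's proof splits into the cases $g\in\langle F_n\rangle$ and $g\notin\langle F_n\rangle$, and in the second case explicitly sets $\lambda_{n+1}(g)=\lambda_n(f)/m$ for the minimal such relation (or $1$ if there is none). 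Your step as written fails and needs that case analysis inserted.

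A second, smaller omission: Proposition \ref{Katetovext_prop} requires the Kat\v etov function to be defined on the \emph{whole} current finite set (your $A_{2n+1}$), whereas your $f_n$ lives only on $F_n\subseteq A_{2n+1}$. Applying the proposition with $A=F_n$ only yields a partial norm extending $\rho_{2n+1}$ restricted to $\overline{F_n}$, and gluing that to $\rho_{2n+1}$ on the rest of $A_{2n+1}$ (or re-closing under subtraction via Fact \ref{getnormfact}) can shrink values and destroy either the realization of $f_n$ or the extension property over $\rho_{2n}$. The fix is standard and is done explicitly in the paper: first extend $f_n$ to a rational Kat\v etov function on all of $A_{2n+1}$ by $f(x)=\min\{f_n(a)+d(a,x):a\in F_n\}$, then apply the proposition with $A=A_{2n+1}$. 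On the positive side, your enumeration of concrete pairs $(F,f)$ with $F\subseteq G$ finite (rather than the paper's abstract rational metric spaces) is a clean simplification that avoids the paper's loop over the finitely many isometric copies, and your reduction of existence to density together with the rationality bookkeeping is correct.
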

\begin{proof}
The proof uses Lemmas \ref{unbddgrp_lem} and \ref{extnorm_lem}, and Proposition \ref{Katetovext_prop} and follows the standard construction of the rational Urysohn space. Enumerate $G$ as $\{g_n:n\in\Nat\}$ and let $\{(A_i,f_i):i\in\Nat\}$ be an enumeration with infinite repetition of all pairs $(A,f)$, where $A$ is a finite rational metric space and $f:A\rightarrow \Rat$ a rational Kat\v etov function over $A$.\\

By induction, we shall produce finite symmetric sets $F_i$, $i\in\Nat_0$, containing zero, with partial rational norm $\lambda_i$ on $F_i$ such that for every $n$, $F_n\subseteq F_{n+1}$ and $\lambda_n\subseteq \lambda_{n+1}$,  $G=\bigcup_n F_n$, and such that $G$ with the metric induced by the norm $\lambda=\bigcup_n \lambda_n$ is isometric to the rational Urysohn space.\\

Set $F_0=\{0\}$ and let $\lambda_0$ be the trivial norm on $F_0$. Now suppose that for some even $n$, $F_n$ and $\lambda_n$ on $F_n$ have been defined. We define $F_{n+1}$ and $\lambda_{n+1}$. Take the element $g=g_{n/2+1}$. If $g\in F_n$ then we do nothing, i.e. set $F_{n+1}=F_n$ and $\lambda_{n+1}=\lambda_n$. So suppose that $g\notin F_n$. We set $F_{n+1}=F_n\cup\{g,-g\}$. We need to extend $\lambda_n$. We distinguish cases:
\begin{itemize}
\item $g\in \langle F_n\rangle$: then we use Lemma \ref{extnorm_lem} to extend the partial norm $\lambda_n$ on $F_n$ to a partial norm $\lambda_{n+1}$ on $F_{n+1}$,
\item $g\notin \langle F_n\rangle$: then we can set 
\begin{enumerate}
\item $\lambda_{n+1}(a)=\lambda_n(a)$ if $a\in F_n$,
\item let $m$ be the minimal positive integer such that for some $f\in F_n\setminus\{0\}$, $m\cdot g=f$; if no such $m$ exists (then in particular, $g$ has an infinite order), or $f=0$, then we set $\lambda_{n+1}(g)=\lambda_{n+1}(-g)=1$; otherwise, we set $\lambda_{n+1}(g)=\lambda_{n+1}(-g)=\lambda_n(f)/m$.
\end{enumerate}
In any case, it is easy to check that $\lambda_{n+1}$ is a partial norm on $F_{n+1}$.
\end{itemize}

Now suppose that for some odd $n$, $F_n$ and $\lambda_n$ on $F_n$ have been defined. Set $G_n=\overline{F}_n=\{a-b:a,b\in F_n\}$ and extend $\lambda_n$ to $\rho_n$ on $G_n$ by Lemma \ref{extnorm_lem}. Then $\rho_n$ induces a metric $d_n$ on $F_n$, as usual, by $d_n(a,b)=\rho_n(a-b)$, for $a,b\in F_n$.

Set $(A,f)=(A_{\frac{n+1}{2}},f_{\frac{n+1}{2}})$. If there is no subset of $(F_n,d_n)$ isometric to $A$ then do nothing and set $F_{n+1}=F_n$ and $\lambda_{n+1}=\lambda_n$. Otherwise, take some subset $B\subseteq F_n$ isometric to $A$ and consider $f$ to be defined on $B\cong_{\mathrm{iso}} A$. We can clearly extend $f$ to the whole $F_n$, still denoted by $f$, so that it is still Kat\v etov and rational-valued. Just set for instance $f(x)=\min\{f(a)+d_n(a,x):a\in B\}$, for $x\in F_n$.

Set $m=\min\{\min \rho_n(G_n\setminus\{0\}), \min f(F_n)\}$ and\\ $M=\max\{\max \rho_n(G_n), \max f(F_n)\}$. By Lemma \ref{unbddgrp_lem} we can find some element $g\in G$ such that $\dist(g,G_n)>2\frac{M}{m}$. Then by Proposition \ref{Katetovext_prop} we can extend $G_n$ to $F'_{n+1}=G_n\cup\{g-a,a-g:a\in F_n\}$ and $\rho_n$ to a partial norm $\lambda'_{n+1}$ on $F'_{n+1}$ such that $f(x)=\lambda'_{n+1}(g-x)$ for every $x\in F_n$. If there was a unique isometric embedding of $A$ into $(F_n,d_n)$ then we are done. Otherwise, we consecutively repeat the above procedure for all other isometric embeddings of $A$ into $F_n$ enlarging $F'_{n+1}$ to $F''_{n+1}$, then to $F'''_{n+1}$, etc. The last obtained set is $F_{n+1}$.\\

When the induction is finished we have that $G=\bigcup_n F_n$ since at the $n$-th step, for even $n$, we have guaranteed that $g_{n/2+1}$ is contained in $F_{n+1}$. Moreover, $G$ with the metric induced by the norm $\lambda=\bigcup_n \lambda_n$ is isometric to the rational Urysohn space. By Fact \ref{isUryfact} it suffices to check it satisfies the rational one-point extension property. However if we take some finite $B\subseteq G$ and one-point extension determined by a rational Kat\v etov function $f$ on $B$, then we can find $n$ such that $B\subseteq F_{2n-1}$, $B$ is isometric to $A_n$ and $f$ on $B$ corresponds to $f_n$ on $A_n$. Then we have guaranteed that the Kat\v etov function is realized in $F_{2n}$.\\

Finally, we show how to get the ``moreover part" from the statement of the theorem, i.e. that the set of norms with which $G$ is isometric to the rational Urysohn space is actually dense.

Take any finite symmetric subset $F\subseteq G$  containing zero, let $\rho$ be an arbitrary partial norm on $F$ and $\varepsilon>0$ arbitrary. Then using Lemma \ref{ratpartnorm} we get a partial rational norm $\rho_R$ on $F$ such that for every $f\in F$ we have $|\rho(f)-\rho_R(f)|<\varepsilon$. We just set $F_0=F$ and $\lambda_0=\rho_R$. Then we continue the induction as above and obtain at the end a norm $\lambda$ with which $G$ is isometric to the rational Urysohn space and is $\varepsilon$-close on a finite subset $F$ to the partial norm $\rho$.
\end{proof}
\begin{cor}\label{corollaryUry}
If $G$ is unbounded, then the set of norms\\ $\{\lambda: \overline{(G,\lambda)}\text{ is isometric to }\mathbb{U}\}$ is comeager.
\end{cor}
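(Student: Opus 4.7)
The plan is to exhibit the set $\mathcal{U}=\{\lambda:\overline{(G,\lambda)}\cong_{\mathrm{iso}}\mathbb{U}\}$ as a countable intersection of open dense subsets of $\mathcal{N}_G$. The characterization I will use is the standard one: a Polish metric space $X$ is isometric to $\mathbb{U}$ if and only if for every finite $F\subseteq X$, every Kat\v etov function $f:F\to\Rea_0^+$, and every $\varepsilon>0$, there is $x\in X$ with $|d(x,y)-f(y)|<\varepsilon$ for all $y\in F$. Since $G$ is dense in its completion and $\Rat$ is dense in $\Rea$, this is equivalent to the same property formulated with finite $F\subseteq G$, rational $\phi:F\to\Rat_0^+$, and rational $\varepsilon>0$.

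For each such triple $(F,\phi,\varepsilon)$ I define
$$U_{F,\phi,\varepsilon}=\{\lambda:\phi\text{ is not Kat\v etov w.r.t.\ }d_\lambda\}\cup\{\lambda:\exists g\in G\ \forall x\in F\ |\lambda(g-x)-\phi(x)|<\varepsilon\}.$$
Both pieces are manifestly open in $\mathcal{N}_G$: the first because ``not Kat\v etov'' asserts that one of finitely many weak inequalities fails strictly, and the second because the existence of a witness $g\in G$ satisfying finitely many strict inequalities is open. Since there are only countably many triples $(F,\phi,\varepsilon)$, it then suffices to prove (i) $\mathcal{U}=\bigcap_{F,\phi,\varepsilon}U_{F,\phi,\varepsilon}$ and (ii) each $U_{F,\phi,\varepsilon}$ is dense.

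For (ii), I will use Theorem \ref{Urysohnnorms}: the set $\mathcal{V}=\{\lambda:(G,\lambda)\cong_{\mathrm{iso}}\Rat\mathbb{U}\}$ is dense in $\mathcal{N}_G$, so it is enough to show $\mathcal{V}\subseteq U_{F,\phi,\varepsilon}$. If $\lambda\in\mathcal{V}$ and $\phi$ is Kat\v etov with respect to $d_\lambda$, then because $d_\lambda$ takes only rational values on $F$ and $\phi$ is rational, the pair $(d_\lambda|_F,\phi)$ is a rational one-point extension of $(F,d_\lambda|_F)$, which by Fact \ref{isUryfact} is realized exactly by some $g\in G$. If on the other hand $\phi$ is not Kat\v etov with respect to $d_\lambda$, then $\lambda$ is in the first piece of $U_{F,\phi,\varepsilon}$. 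Either way $\lambda\in U_{F,\phi,\varepsilon}$, so $U_{F,\phi,\varepsilon}$ is dense.

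The main obstacle is (i). The forward inclusion $\mathcal{U}\subseteq\bigcap U_{F,\phi,\varepsilon}$ is straightforward: if $\overline{(G,\lambda)}\cong_{\mathrm{iso}}\mathbb{U}$ and $\phi$ is Kat\v etov with respect to $d_\lambda$, realize $\phi$ exactly by a point of $\mathbb{U}$ and approximate it within $\varepsilon$ by some $g\in G$. The reverse inclusion is the real work: given $\lambda\in\bigcap U_{F,\phi,\varepsilon}$, a finite $F'\subseteq\overline{G}$, a Kat\v etov $f:F'\to\Rea^+$, and $\varepsilon>0$, I approximate $F'$ by a finite $F\subseteq G$ and $f$ by a rational $\phi$ that is still (approximately) Kat\v etov with respect to $d_\lambda$, then iterate the use of the $U_{F,\phi,\varepsilon_n}$ with a geometrically decreasing sequence $\varepsilon_n\to 0$ to build a Cauchy sequence in $G$ whose limit realizes $f$ within $\varepsilon$. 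This is the standard back-and-forth completion argument; the only subtlety is keeping track that the perturbations of $\phi$ needed to make it strictly Kat\v etov with respect to $d_\lambda$ remain within $\varepsilon$, which is harmless since both ``Kat\v etov'' and ``$\varepsilon$-realization'' are stable under small uniform perturbations. Once (i) and (ii) are established, $\mathcal{U}$ is a dense $G_\delta$, hence comeager.
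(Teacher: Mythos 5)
Your proposal is correct and follows essentially the same route as the paper: both encode the approximate realization of rational Kat\v etov functions over finite subsets of $G$ as a $G_\delta$ condition, defer the implication ``condition $\Rightarrow$ completion is $\mathbb{U}$'' to the standard completion/back-and-forth argument, and obtain density from Theorem \ref{Urysohnnorms} since any norm making $G$ isometric to $\Rat\mathbb{U}$ realizes such functions exactly. The only differences are cosmetic: you spell out the open sets $U_{F,\phi,\varepsilon}$ (with the ``not Kat\v etov'' escape clause) explicitly, and you additionally prove the unneeded forward inclusion $\mathcal{U}\subseteq\bigcap U_{F,\phi,\varepsilon}$, whereas the paper only needs and only uses the reverse one.
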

\begin{proof}
Consider the set of norms $\lambda$ on $G$ satisfying the following condition:
\begin{equation}\label{Uryaftercompl}
\begin{split}
\forall \varepsilon>0\forall F\subseteq G\text{ finite symmetric and containing zero} ,\\\forall f:F\rightarrow \Rat \text{ Kat\v etov with respect to }\overline{F}\\ \exists g\in G\forall a\in F (|f(a)-\lambda(g-a)|<\varepsilon).
\end{split}
\end{equation}
It is well-known and straightforward to prove using standard arguments that for any $\lambda$ satisfying \eqref{Uryaftercompl} we have that the completion $\overline{(G,\lambda)}$ is isometric to the Urysohn space.

Moreover, an immediate computation gives that \eqref{Uryaftercompl} is a $G_\delta$ condition. Any norm $\lambda$ with which $G$ is isometric to the rational Urysohn space certainly satisfies \eqref{Uryaftercompl}, thus it follow from Theorem \ref{Urysohnnorms} that the condition defines a dense $G_\delta$ set.
\end{proof}
The proofs of Theorem \ref{Urysohnnorms} and Corollary \ref{corollaryUry} also conclude the proof of Theorem \ref{intro_thm1}.
\begin{remark}
We note that there is one example in the literature of a countable abelian group which admits a norm which makes it isometric to the rational Urysohn space, yet it is not unbounded. It is the countable Boolean group (see \cite{Nie2}). The case of exponent $2$ is obviously special and it is open whether other bounded countable abelian groups admit such a norm (see the open problems in \cite{Nie1}, where it is proved that groups of exponent $3$ do not admit such a norm). We conjecture that they do not.
\end{remark}
\section{Generic norms}
For a norm $\lambda$ on $G$ denote by $\overline{(G,\lambda)}$ the completion. We shall call a norm $\lambda$ on a countable abelian group $G$ \emph{generic} if the set $\{\rho\in\mathcal{N}_G:\overline{(G,\lambda)}\cong \overline{(G,\rho)}\}$ is comeager. In other words, a countable abelian group $G$ admits a generic norm if all the norms on $G$ except those coming from a meager set give rise to the same normed group after the completion. It follows from Theorem \ref{thmMeTs} that if $\lambda$ is a generic norm on a countable unbounded abelian group $G$, then $(G,\lambda)$ is extremely amenable.\\

Let us start with the following easy to check and well known observation. If $(G,\lambda)$ is a normed abelian group and $g\in G$ then $\lim_{n\to \infty} \frac{\lambda(n\cdot g)}{n}$ exists and is equal to $\inf_n \frac{\lambda(n\cdot g)}{n}$. Following Niemiec in \cite{Nie1}, by $\mathcal{O}_0$ we denote the class of those abelian normed groups $(G,\lambda)$ such that for all $g\in G$, $\lim_n \frac{\lambda(n\cdot g)}{n}=0$. The next lemma shows that if there is a generic norm $\lambda$ on $G$, then necessarily $(G,\lambda)\in\mathcal{O}_0$.

\begin{lem}\label{N0normlem}
For every countable abelian group $G$ the set $N_0\subseteq \mathcal{N}_G$ of those norms $\lambda$ on $G$ such that $(G,\lambda)\in\mathcal{O}_0$ is dense $G_\delta$.
\end{lem}
\begin{proof}
First we check that $N_0$ is $G_\delta$. We have $\lambda\in N_0$ if and only if for all $g\in G$ and every $\varepsilon>0$ there exists $n\in\Nat$ such that $\frac{\lambda(n\cdot g)}{n}<\varepsilon$, which is certainly a $G_\delta$ condition.

To show that it is dense it suffices to show that the set of bounded norms is dense, since bounded norms clearly belong to $N_0$. That is however immediate. Take some basic open neighborhood of some $\lambda'$ given by a finite set $A\subseteq G$ and some $\varepsilon>0$ (the $\varepsilon$ will be however irrelevant). Let $M=\max\{\lambda'(g):g\in A\}+1$ and define a norm $\lambda$ as $\min\{\lambda',M\}$.
\end{proof}
From now on, we shall call norms $\lambda$ (on some $G$) such that $(G,\lambda)\in\mathcal{O}_0$, $N_0$-\emph{norms}.
\begin{defin}
Let $G$ be a countable abelian group. We call $G$ \emph{infinitely-summed} if $G\cong \bigoplus_{n\in\Nat} G$.

In particular, notice that if non-trivial $G$ is infinitely-summed then it is not finitely generated.
\end{defin}
\begin{remark}
There is a simple, however universal, way how to produce infinitely-summed groups. If $H$ is an arbitrary countable abelian group, then $\bigoplus_{n\in\Nat} H$ is infinitely-summed. Such groups are highly homogeneous: a reader versed in Fra\" iss\' e theory can check that standard Fra\" iss\' e classes of finitely generated abelian groups have Fra\" iss\' e limits that are infinitely-summed. Consider e.g. the Fra\" iss\' e class of all finite abelian groups whose Fra\" iss\' e limit is $\bigoplus_{n\in\Nat} \Rat/\Int$, or the class of all finitely generated torsion-free abelian groups whose Fra\" iss\' e limit is $\bigoplus_{m\in\Nat} \Rat$.

The main feature of infinitely-summed groups that is used in the proof of Proposition \ref{denseprop} is that if $G$ is such a group and $F$ is its finitely generated subgroup, then there exists a finitely generated subgroup $F'\leq G$, isomorphic to $F$ and such that $F\cap F'=\{0\}$.
\end{remark}

Let $G$ be a non-trivial infinitely-summed countable abelian group. Write $G$ as $\bigoplus_n G_n$, where for each $n\in\Nat$, $G_n\cong G$. For each $i\in \Nat$, let $\phi_i:G_1\rightarrow G_i$ be an isomorphism. Fix some infinite sequence $(d'_n)_n$ of $G_1$ such that for every $n$, $d'_{n+1}\notin\langle d'_i:i\leq n\rangle$, and $G_1=\langle d'_n:n\in\Nat\rangle$. Let $D=\{\phi_i(d'_n):i,n\in\Nat\}$, i.e. $D$ generates $G$. Fix also an enumeration $\{d_n:n\in\Nat\}$ of $D$. For every set of integers (finite in all cases in the sequel) $A\subseteq \Nat$, let $F_A\leq G$ be the subgroup $\langle d_i:i\in A\rangle$. Note that for any finite $A\subseteq \Nat$ there exists finite $A'\subseteq \Nat$ disjoint from $A$ and a bijection $\phi:A\rightarrow A'$ which uniquely determines an isomorphism $\bar{\phi}:F_A\rightarrow F_{A'}$ determined by sending $d_i$ to $d_{\phi(i)}$ for $i\in A$. Let $\Phi$ be the set of all isomorphisms between finitely generated subgroups of the form $F_A$ and $F_{A'}$ 
which are determined by some bijection between $A$ and $A'$. Obviously, for most choices of generating sets $D$, not all bijections between two finite subsets $A,A'\subseteq \Nat$ of the same size give rise to isomorphisms between $F_A$ and $F_{A'}$, which do not have to be isomorphic at all. We write $A\equiv A'$ if there does exist a bijection between $A$ and $A'$ which gives rise to an isomorphism between $F_A$ and $F_{A'}$. If we want to specify the isomorphism, we write $A\equiv_\phi A'$, where $\phi$ is the bijection between $A$ and $A'$ and $\bar{\phi}$ the corresponding isomorphism between $F_A$ and $F_{A'}$.

Also, for each finite $A\subseteq \Nat$, let $|\cdot|_A:F_A\rightarrow \Nat$ be the length function (i.e. norm) associated to the generating set $\{d_i,-d_i:i\in A\}$, i.e. the distance from $0$ in the graph metric of the Cayley graph of $F_A$ with $\{d_i,-d_i:i\in A\}$ as a generating set. If there is no danger of confusion then we write just $|\cdot|$ instead of $|\cdot|_A$.

The next definition introduces a certain distance between two norms $\rho$, resp. $\rho'$ on $F_A$, resp. $F_{A'}$, where $A\equiv A'$.
\begin{defin}
Suppose that $A,A'\subseteq \Nat$ are two finite subsets such that $A\equiv_\phi A'$ for some $\phi\in\Phi$. Suppose also that $F_A$ is equipped with a norm $\rho$ and $F_{A'}$ with a norm $\rho'$. Then we say that $(A,\rho)$ and $(A',\rho')$ are \emph{$\phi,\varepsilon$-close} and write $(A,\rho)\equiv_{\phi,\varepsilon} (A',\rho')$ if $$\sup_{f\in F_A\setminus\{0\}}\frac{|\rho(f)-\rho'(\bar{\phi}(f))|}{|f|_A}<\varepsilon.$$
\end{defin}
Note that if $(A,\rho)\equiv_{\phi,\varepsilon} (A',\rho')$ and $(A',\rho')\equiv_{\psi,\delta} (A'',\rho'')$, then $(A,\rho)\equiv_{\psi\circ \phi,\varepsilon+\delta} (A'',\rho'')$.

We shall also need a notion of `closeness' of two subgroups $F_A,F_{A'}\leq G$ with respect to some norm $\lambda$ on $G$ and some $\phi\in\Phi$ such that $A\equiv_\phi A'$.
\begin{defin}
Suppose that $G$ is equipped with a norm $\lambda$. Let $A,A'$ be two finite subsets of naturals such that for some $\phi\in\Phi$ we have $A\equiv_\phi A'$ and let $\varepsilon>0$. Then we write $\delta^\lambda_\phi (A,A')<\varepsilon$ if for every $a\in A$ we have $\lambda(d_a-\bar{\phi}(d_a))=\lambda(d_a-d_{\phi(a)})<\varepsilon$. Notice that this is equivalent to saying that for every non-zero $f\in F_A$ we have $\lambda(f-\bar{\phi}(f))<\varepsilon\cdot |f|_A$. Again, we shall suppress the upper index $\lambda$ from $\delta^\lambda_\phi$ when it is clear from the context.
\end{defin}

Next lemma shows that the condition $(A,\rho)\equiv_{\phi,\varepsilon} (A',\rho')$ is determined on finite sets for $N_0$-norms.
\begin{lem}\label{lem_findet}
Let $A,A'\subseteq \Nat$ be two finite subsets such that $A\equiv_\phi A'$.  Then for any $N_0$-norm $\rho$ on $F_A$ and any $\varepsilon>0$ there exist a finite subset $C\subseteq F_A$ and $\delta>0$ such that for any $N_0$-norm $\rho'$ on $F_{A'}$, if for every $a\in C$ we have $|\rho(a)-\rho'(\bar{\phi}(a))|<\delta$, then $(A,\rho)\equiv_{\phi,\varepsilon} (A',\rho')$.
\end{lem}
\begin{proof}
To simplify the notation, we shall assume that $\phi$ is the identity. Therefore we look for a finite subset $C\subseteq F_A$ and $\delta>0$ such that for any $N_0$-norm $\rho'$ on $F_A$, if for every $a\in C$ we have $|\rho(a)-\rho'(a)|<\delta$, then $\frac{|\rho(x)-\rho'(x)|}{|x|}<\varepsilon$, for every $x\in F_A\setminus\{0\}$.\\

Since $F_A$ is a finitely generated abelian group it is isomorphic to a sum of a finitely generated free abelian group $F$ and a finite abelian group $K$. Suppose first that $F_A$ is $F$, i.e. it is a finitely generated free abelian group, and moreover suppose that the generators $(d_i)_{i\in A}$ are free generators. Any $x\in F_A$ can be thus uniquely written as $\sum_{i\in A} k_i\cdot d_i$. For $i\in A$ denote by $|x|_i$ the absolute value of $k_i$. Now for any $n\in \Nat$
set $A'_n=\{x\in F_A : \forall i\in A (|x|_i\leq n)\}$ and $A_n=\{x\in F_A : |x| \leq n\}$.

We also set (see Claim 3.3 in \cite{Do1}), for any $k\in\Nat$ $$B'_k=\sup_{x\in F_A\setminus\{0\}} \min\{\frac{|x-l\cdot y|}{|x|}:l\in\Nat,y\in A'_k,|l\cdot y|\leq 2|x|\}$$
and
$$B_k=\sup_{x\in F_A\setminus\{0\}} \min\{\frac{|x-l\cdot y|}{|x|}:l\in\Nat,y\in A_k,|l\cdot y|\leq 2|x|\}.$$

\begin{claim}
There exists $K$ and $\delta>0$ such that for every $N_0$-norm $\rho'$ on $F_A$, if $|\rho(x)-\rho'(x)|<\delta$ for every $x\in A_K$, then for every $x\in F_A$ such that $|x|>K$ we have $\frac{\rho'(x)}{|x|}<\varepsilon/2$.
\end{claim}
Once the claim is proved we are done (with the case when $F_A$ is a free abelian group with free generators). Indeed, use $\delta>0$ from the claim and set as $C$ the set $A_K$. Then if $\rho'$ is a $N_0$-norm on $F_A$ which is $\delta$-close to $\rho$ on $C$, then for any $x\in F_A\setminus\{0\}$
\begin{itemize}
\item if $x\in C$, then trivially $\frac{|\rho(x)-\rho'(x)|}{|x|}<\varepsilon$ as we may assume that $\delta<\varepsilon$;
\item if $|x|>K$, then by the claim we have $\frac{\rho(x)}{|x|}<\varepsilon/2$ and $\frac{\rho'(x)}{|x|}<\varepsilon/2$, so $\frac{|\rho(x)-\rho'(x)|}{|x|}\leq \frac{\rho(x)+\rho'(x)}{|x|}<\varepsilon$.

\end{itemize}
\emph{Proof of the claim.} Set $M=\max\{\rho(d_i):i\in A\}$. It has been proved in \cite{Do1} that the sequence $(B'_k)_k$ converges and in fact we have $\lim_{k\to\infty} B'_k=0$ (see Claim 3.3 in \cite{Do1}). Therefore there exists $k'\in\Nat$ so that $B'_k<\varepsilon/(4M+\varepsilon/8)$. Clearly there is $k$ so that we have $A_{k'}\subseteq A'_{k'}\subseteq A_k$, which yields that $B_k<\varepsilon/(4M+\varepsilon/8)$. We may and do assume that $\varepsilon/(4M+\varepsilon/8)<1/2$.

Now let $K'>k$ be such that for every $y\in A_k$ we have $$\frac{\rho(m\cdot y)}{m}<\varepsilon/32$$ for every $m\geq K'$, which is possible since $\rho$ is an $N_0$-norm. Finally, let $K>2K'\cdot k$ be such that for every $x\in A_{K'\cdot k}$ we have $\frac{\rho(x)}{K}<\varepsilon/16$.\\

We claim that $K$ and $\delta=\varepsilon/32$ are as desired. Take any $N_0$-norm $\rho'$ on $F_A$ which is $\varepsilon/32$-close to $\rho$ on $A_K$. Take any $x\notin A_K$.
We may find some $y\in A_k$ and non-negative $l$, such that $l|y|\leq 2|x|$, and $\frac{|x-l\cdot y|}{|x|}<\varepsilon/(4M+\varepsilon/8)$. We claim that $l\geq K'$ since otherwise $|l\cdot y|<K'\cdot k$, so $|x-l\cdot y|\geq |x|/2$, so $\frac{|x-l\cdot y|}{|x|}\geq 1/2$.

We have $$\frac{\rho'(x)}{|x|}\leq \frac{\rho'(l\cdot y)+\rho'(x-l\cdot y)}{|x|}$$ where $$\frac{\rho'(x-l\cdot y)}{|x|}\leq \frac{(M+\varepsilon/32)|x-l\cdot y|}{|x|}< (M+\varepsilon/32)\varepsilon/(4M+\varepsilon/8)= \varepsilon/4.$$ Note that the first inequality follows from the fact that $\rho(d_i)\leq M+\varepsilon/32$, for $i\in A$, since $\rho(d_i)\leq M$, for $i\in A$, by the definition of $M$ and $|\rho(d_i)-\rho'(d_i)|<\varepsilon/32$, for $i\in A$.

So it suffices to show that $\frac{\rho'(l\cdot y)}{|x|}<\varepsilon/4$. Write $l$ as $tK'+r$, where $0<r<K'$. Note that $t\geq 1$. Then we have $$\frac{\rho'(l\cdot y)}{|x|}\leq \frac{t\cdot \rho'(K'\cdot y)+\rho'(r\cdot y)}{|x|}\leq \frac{2t\cdot \rho'(K'\cdot y)}{t\cdot |K'\cdot y|}+\frac{\rho'(r\cdot y)}{K}.$$
Note that $$\frac{2t\cdot \rho'(K'\cdot y)}{t\cdot |K'\cdot y|}\leq 2\frac{\rho'(K'\cdot y)}{K'}\leq 2\frac{\rho(K'\cdot y)+\varepsilon/32}{K'}< 2(\varepsilon/32+\varepsilon/32)= \varepsilon/8$$ by the definition of $K'$ and since $\rho'$ is $\varepsilon/32$-close to $\rho$ on $A_K$, and $$\frac{\rho'(r\cdot y)}{K}\leq \frac{\rho(r\cdot y)+\varepsilon/32}{K}\leq \varepsilon/16+\varepsilon/32<\varepsilon/8$$ by the definition of $K$ and again
since $\rho'$ is $\varepsilon/32$-close to $\rho$ on $A_K$.\\

Now we suppose that $F_A$ is in general a sum of a finitely generated free abelian group $F$ and a finite group $K$. Let us however assume that the given generators of $F_A$ are free generators of $F$ together with all the non-zero elements of $K$. Denote the length function on $F$ by $|\cdot|_F$ and the length function on $F_A$ determined by the generators above by $|\cdot|'$. First we apply the result above for $F$ with its free generators, $\rho\upharpoonright F$ and $\varepsilon/4$. We get some corresponding finite $C'\subseteq F$ and $\delta>0$ such that $\delta<\varepsilon/2$. Now let $M=(\max \rho(K))+\delta$. Take some $k\in\Nat$ such that $C'\subseteq C=\{x\in F_A:|x|'\leq k\}$ and $M/k<\varepsilon/8$. We claim that now $C\subseteq F_A$ and $\delta>0$ are as desired. Let $\rho'$ be an arbitrary norm on $F_A$ such that $|\rho(a)-\rho'(a)|<\delta$ for all $a\in C$. Take any $z\in F_A\setminus\{0\}$. If $|z|'\leq k$, then $|\rho(z)-\rho'(z)|<\delta$, so $\frac{|\rho(z)-\rho'(z)|}{|z|'}<\varepsilon$, since $\delta\leq \varepsilon/2$. So suppose that $|z|'>k$ and write $z$ as $x+u$, where $x\in F$ and $u\in K$. We have $|x+u|'-|x|_F\in \{0,1\}$ and $$\frac{|\rho(x+u)-\rho'(x+u)|}{|x+u|'}\leq \frac{|\rho(x)-\rho'(x)|+\rho(u)+\rho'(u)}{|x|_F+1}\leq$$ $$\frac{|\rho(x)-\rho'(x)|}{|x|_F+1}+\frac{\rho(u)+\rho'(u)}{k}< \varepsilon/4+2\varepsilon/8+\delta<\varepsilon.$$

Now suppose that the generators of $F_A$ are arbitrary. It is a well-known and easy to observe that the length function $|\cdot|_A$ and the length function $|\cdot|'$ from the paragraph above are bi-Lipschitz equivalent. In particular, there is some $L$ such that $|\cdot|_A\leq L|\cdot|'$. Thus we may apply the result from the paragraph above for $F_A$ with the generators above and with $\varepsilon/L$ to obtain the 
result for $F_A$ with generators $\{d_a:a\in A\}$ and with $\varepsilon$. 
\end{proof}

We present one more definition of a certain easily definable norm.
\begin{defin}
Let us denote a norm on some $F_C$, for $C$ finite, \emph{finitely generated} if it is obtained as an extension using Lemma \ref{extnorm_lem} of some partial norm defined on a finite symmetric subset of $F_C$ containing zero.

Moreover, call a norm on some $F_C$, where again $C$ is finite, \emph{bounded finitely generated}, if it is a bounded norm which is defined as the minimum over a finitely generated norm and a positive constant. This constant is called a bound of the norm.

Note that there are only countably many rational bounded finitely-generated norms on a fixed countable abelian group, i.e. norms that are defined as a minimum over a rational bound and a rational finitely generated norm. We shall call them \emph{BRFG norms}.
\end{defin}
\begin{claim}\label{claim1}
Let $C\subseteq \Nat$ be finite and let $\rho$ be an $N_0$-norm on $F_C$. Then for every $\varepsilon>0$ there exists a BRFG norm $\rho_R$ on $F_C$ such that $(C,\rho)\equiv_{\mathrm{id},\varepsilon} (C,\rho_R)$.
\end{claim}
\begin{proof}[Proof of Claim \ref{claim1}.]
Fix $C$, an $N_0$-norm $\rho$ on $F_C$ and $\varepsilon>0$. We use Lemma \ref{lem_findet} to find a finite set $F\subseteq F_C$ and $\delta>0$ such that for any $N_0$-norm $\rho'$ on $F_C$, if $|\rho(f)-\rho'(f)|<\delta$, for every $f\in F$, then $(C,\rho)\equiv_{\mathrm{id},\varepsilon} (C,\rho')$. We may suppose that $F$ is finite symmetric containing zero. Then we use Lemma \ref{ratpartnorm} to find a partial rational norm $\rho'_R$ on $F$ such that $|\rho(f)-\rho'_R(f)|<\delta$, for every $f\in F$. Then we take the extension of $\rho'_R$ to the whole $F_C$, obtained by Lemma \ref{extnorm_lem}. Finally we bound this extension by $\max_{f\in F} \rho'_R(f)$. This is the desired BRFG norm $\rho_R$.
\end{proof}
We now restate Theorem \ref{intro_thm2} here for the convenience of the reader and start with its proof.
\begin{thm}\label{mainthm}
Let $G$ be an infinitely-summed group. Then $G$ admits a generic norm $\lambda$.
\end{thm}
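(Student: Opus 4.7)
The plan is to identify a comeager set $\mathcal{C} \subseteq \mathcal{N}_G$ such that any two norms in $\mathcal{C}$ give rise to isometrically isomorphic completions; any element of $\mathcal{C}$ is then a generic norm. The set $\mathcal{C}$ is defined by an approximate one-point extension property tailored for a back-and-forth argument.

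For each parameter $\mathcal{D} = (A, A^*, \rho^*, \varepsilon, A', \phi)$ with $A \subseteq A^* \subseteq \Nat$ finite, $\rho^*$ a rational partial norm on $F_{A^*}$, $\varepsilon > 0$ rational, and $A' \subseteq \Nat$ finite with $A \equiv_\phi A'$ for some $\phi \in \Phi$, I will let $U_\mathcal{D}$ consist of those $\lambda \in \mathcal{N}_G$ satisfying: either $(A, \rho^*|_{F_A})$ is not $(\varepsilon/2)$-close to $(A', \lambda|_{F_{A'}})$ via $\phi$ (using the non-strict variant of closeness so this set is open), or there exist $A'^* \supseteq A'$ and $\phi^* \supseteq \phi$ in $\Phi$ with $A^* \equiv_{\phi^*} A'^*$ and $(A^*, \rho^*) \equiv_{\phi^*, \varepsilon} (A'^*, \lambda|_{F_{A'^*}})$. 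By Lemma \ref{lem_findet}, each sub-condition is controlled by finitely many values of $\lambda$, so $U_\mathcal{D}$ is open. Density is where the hypothesis $G \cong \bigoplus_\Nat G$ enters: given a basic open $V$ constraining only some finite $F_{A_0}$, the infinitely-summed structure allows a fresh $B' \subseteq \Nat$ disjoint from $A_0 \cup A'$ with $A^* \setminus A \equiv B'$ via a bijection extending $\phi$ to $\phi^*$; one declares the partial norm on $F_{B'}$ to realize $\rho^* \circ (\bar{\phi^*})^{-1}$ and extends to a full norm on $G$ via Lemma \ref{extnorm_lem}, producing an element of $V \cap U_\mathcal{D}$. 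The intersection $\mathcal{C} = \bigcap_\mathcal{D} U_\mathcal{D}$ over the countable parameter set is then comeager, indeed dense $G_\delta$.

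To verify that any two $\lambda, \mu \in \mathcal{C}$ yield isomorphic completions, I run a standard back-and-forth. Fix an enumeration $\Nat = \{k_n\}_n$ and a rapidly decreasing sequence $\varepsilon_n \to 0$ with $\varepsilon_{n+1} < \varepsilon_n/10$. Inductively build finite $A_n, A'_n \subseteq \Nat$ and $\phi_n \in \Phi$ with $A_n \equiv_{\phi_n} A'_n$, $\phi_n \subseteq \phi_{n+1}$, and $(A_n, \lambda|_{F_{A_n}}) \equiv_{\phi_n, \varepsilon_n} (A'_n, \mu|_{F_{A'_n}})$, arranging at odd steps to add $k_n$ to $A_n$ and at even steps to add $k_n$ to $A'_n$. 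At an odd step, approximate $\lambda|_{F_{A_n \cup \{k_n\}}}$ by a rational partial norm $\rho^*$ via Lemma \ref{ratpartnorm} with error well below $\varepsilon_{n+1}$, and invoke $\mu \in U_\mathcal{D}$ for $\mathcal{D} = (A_n, A_n \cup \{k_n\}, \rho^*, \varepsilon_{n+1}, A'_n, \phi_n)$, whose hypothesis is satisfied by the inductive approximation; even steps are symmetric, using $\lambda \in \mathcal{C}$. The limit $\bar\phi_\infty = \bigcup_n \bar\phi_n \colon G \to G$ is then an algebraic group automorphism satisfying $\mu(\bar\phi_\infty(f)) = \lambda(f)$ for every $f \in G$, and hence extends uniquely to an isometric group isomorphism $\overline{(G,\lambda)} \to \overline{(G,\mu)}$.

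The principal obstacle is the bookkeeping of tolerances. Three sources of error must be tracked simultaneously: the rational approximation errors from Lemma \ref{ratpartnorm}, the extension inaccuracy built into the definition of $U_\mathcal{D}$, and the compatibility of $\phi_{n+1}$ with $\phi_n$ on the previously-fixed generators. A geometric $\varepsilon_n$-schedule makes the accumulated error summable, ensuring that each $\bar\phi_n$ remains a genuine $\varepsilon_n$-approximate isometry at every stage and that the limit is exactly isometric. Apart from this careful accounting, the scheme parallels standard Fra\"{\i}ss\'e-type generic constructions in Polish metric structures, here adapted to preserve the fixed underlying group operation on $G$.
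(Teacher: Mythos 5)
Your overall architecture (a dense $G_\delta$ set defined by an approximate extension property, then a back-and-forth showing any two of its members have isometrically isomorphic completions) is exactly the paper's, but the specific formulation of $U_{\mathcal D}$ contains a genuine gap: you require the witnessing isomorphism $\phi^*$ to \emph{extend} $\phi$, i.e.\ to fix the old copy $A'$ pointwise. This fails in two ways. First, density: your argument places $A^*\setminus A$ into a fresh block $B'$ disjoint from $A'$, but $F_{A^*}$ need not split as $F_A\oplus F_{A^*\setminus A}$. Take $G=\bigoplus_\Nat \Int/4$ with a generating sequence beginning $d'_1=2e_1$, $d'_2=e_1$, and $A=\{1\}$, $A^*=\{1,2\}$: any $\phi^*$ extending $\mathrm{id}_{\{1\}}$ must send $d_2$ to a generator $x$ with $2x=2e_1$, i.e.\ $x\in e_1+2G$, and no such generator lives in a summand disjoint from $e_1$'s; for natural choices of $D$ the only candidate is $e_1$ itself. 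Then a norm $\lambda_0$ with $\lambda_0(2e_1)=\rho^*(2e_1)$ but $\lambda_0(e_1)$ far from $\rho^*(e_1)$ has a whole neighborhood satisfying your hypothesis and failing your conclusion, so $U_{\mathcal D}$ is not dense. Second, even where the algebra cooperates, fixing $A'$ means the conclusion can never beat the hypothesis on the old part: $\mu|_{F_{A'_n}}$ is what it is, so you cannot pass from $\varepsilon_n$-closeness to $\varepsilon_{n+1}/2$-closeness as your back-and-forth schedule demands; the errors do not decrease, and the limit map is only an $\varepsilon_1$-approximate isometry.

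The paper's condition \eqref{Gdeltacond} is designed precisely to avoid this: the witness is a \emph{completely new} copy $A'$ of all of $A$ (including a new copy $\phi[A_0]$ of the old part), achieving the strictly better tolerance $\varepsilon'$ there, and the old part is only required to be moved a little, $\delta_\phi(A_0,\phi[A_0])<\varepsilon$, i.e.\ $\lambda(d_a-d_{\phi(a)})<\varepsilon$. Density then follows by planting the whole new copy in a fresh direct summand and gluing it to the old one via connecting elements $d_c-\bar\phi(d_c)$ of small norm (Fact \ref{getnormfact}), which sidesteps both obstructions above. The price is that the back-and-forth no longer yields a nested chain of partial isomorphisms of $G$ itself: the images of each generator form a Cauchy sequence, the isomorphism is defined on the limits in the completions, and one must separately check that these limits generate dense subgroups. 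You would need to rework both the definition of $U_{\mathcal D}$ and the convergence argument along these lines (this is the same Kubi\'s--Solecki-type mechanism used for the Gurarij space).
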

\begin{remark}
We do not know if there is a countable abelian group which is not infinitely-summed and which admits a generic norm.
\end{remark}

From now on, fix a non-trivial infinitely-summed group $G$, the enumerated set of generators $D=\{d_n:n\in\Nat\}$ as above, and the set of bijections $\Phi$ as above. 

Let $\mathcal{G}\subseteq \mathcal{N}$ denote the set of all $N_0$-norms $\lambda$ on $G$ satisfying the following condition:
\begin{equation}\label{Gdeltacond}
\begin{split}
\forall\varepsilon>\varepsilon'>0, \forall A_0\subseteq A\subseteq \Nat \text{ finite }\forall \rho_A\text{ BRFG norm on } F_A\\ \text{if }(A_0,\rho_A)\equiv_{\mathrm{id},\varepsilon} (A_0,\lambda)\text{ then } \exists A'\subseteq \Nat, \exists \phi\in\Phi\text{ such that}\\   (A,\rho_A)\equiv_{\phi,\varepsilon'} (A',\lambda)\text { and }\delta^\lambda_\phi(A_0,\phi[A_0])<\varepsilon.
\end{split}
\end{equation}

In order to prove Theorem \ref{mainthm}, we shall prove that $\mathcal{G}$ is dense $G_\delta$ and that for any $\lambda,\rho\in\mathcal{G}$ we have $\overline{(G,\lambda)}=\overline{(G,\rho)}$. That will give that $G$ admits a generic norm. Moreover in case $G$ is unbounded, we show that there is $\lambda\in \mathcal{G}$ such that $(G,\lambda)$ is isometric to the rational Urysohn space. That will complete the proof of Theorem \ref{mainthm}.

The first step showed in the next lemma is simple.
\begin{lem}\label{isGdelta_lem}
$\mathcal{G}$ is $G_\delta$.
\end{lem}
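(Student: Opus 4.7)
The plan is to rewrite condition \eqref{Gdeltacond} as a countable intersection of sets that are each $G_\delta$, so that $\mathcal{G}$ is $G_\delta$ in $\mathcal{N}$.

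First, I would reduce all quantifiers in \eqref{Gdeltacond} to countable ones. The quantifier $\forall\varepsilon>\varepsilon'>0$ can be restricted to positive rationals, since all inequalities involved are strict and any violation by reals is witnessed by nearby rationals. The quantifier $\forall A_0\subseteq A$ finite ranges over a countable set, as does $\exists A'\subseteq\Nat,\phi\in\Phi$. The subtle step is the universal quantifier over norms $\rho_A$ on $F_A$, which a priori ranges over a Polish space. To handle it, I would invoke Lemma \ref{lem_findet}: for each $\equiv_{\phi,\eta}$-assertion appearing in \eqref{Gdeltacond}, there is a finite subset $C\subseteq F_A$ such that the assertion is implied by finitely many strict inequalities on $C$. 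Fixing intermediate rationals $\varepsilon'<\varepsilon_1<\varepsilon_2<\varepsilon$, I would show that \eqref{Gdeltacond} at $(\varepsilon,\varepsilon',A_0,A)$ is equivalent to its finitary version, in which $\rho_A$ is replaced by a rational-valued map $\bar t:C\to\Rat$ and each $\equiv_\cdot$-check is replaced by the corresponding finite conjunction of strict inequalities at precision $\varepsilon_2$ in the hypothesis and $\varepsilon_1$ in the conclusion. The slack between $\varepsilon_2$ and $\varepsilon$, and between $\varepsilon_1$ and $\varepsilon'$, absorbs the approximation error incurred in passing from any actual norm $\rho_A$ to its rational discretization $\bar t$ on $C$, in both directions: Lemma \ref{lem_findet} projects a norm to its finite data, and conversely a finite-check witness upgrades to the full assertion.

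After this reduction, at each fixed countable tuple of data the inner condition on $\lambda$ takes the shape $\neg P(\lambda)\vee\bigvee_{(A',\phi)}Q_{A',\phi}(\lambda)$, where $P$ and each $Q_{A',\phi}$ are defined by finite conjunctions of strict inequalities in the values of $\lambda$ and are therefore open. The complement of this expression is $P(\lambda)\cap\bigcap_{(A',\phi)}\neg Q_{A',\phi}(\lambda)$, an intersection of an open set with a closed one; such a locally closed set is $F_\sigma$, so the original expression is $G_\delta$ in $\lambda$. Intersecting the resulting $G_\delta$ sets over the countable index of tuples yields $\mathcal{G}\in G_\delta(\mathcal{N})$.

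The chief obstacle is the uncountable universal quantifier over $\rho_A$. Its resolution rests on Lemma \ref{lem_findet} together with the strict slack $\varepsilon>\varepsilon'$ built into \eqref{Gdeltacond}, which ensures that both the hypothesis and the conclusion are robust under small perturbations of $\rho_A$, and hence that it suffices to test only countably many rational approximations.
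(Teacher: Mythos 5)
Correct, and essentially the same approach as the paper: both arguments rest on Lemma \ref{lem_findet} to make the $\equiv_{\phi,\varepsilon}$-conditions finitely determined (hence open in $\lambda$) and to reduce the universal quantification over norms $\rho_A$ to a countable family of rational finitary data, after which the logical form of \eqref{Gdeltacond} yields a countable intersection of sets of the form $(\text{closed})\cup(\text{open})$, which is $G_\delta$. Your write-up is in fact more explicit than the paper's one-line ``one can check that all the universal quantifiers can be taken over countable sets'' on the two delicate points, namely the discretization of $\rho_A$ using the slack $\varepsilon>\varepsilon'$ and the observation that the complement of each basic clause is locally closed.
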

\begin{proof}
Being $N_0$-norm is a $G_\delta$ condition by Lemma \ref{N0normlem}, so we check that the condition \eqref{Gdeltacond} is also $G_\delta$.

That follows from the following observations. Fix $\varepsilon>\varepsilon'>0$, $A_0\subseteq A\subseteq \Nat$ finite and a BRFG norm $\rho_A$ on $F_A$. Then the implication from \eqref{Gdeltacond}, after rewriting it as a disjunction, gives a union of the following two sets $$\{\lambda\in\mathcal{N}:\neg (A_0,\rho_A)\equiv_{\mathrm{id},\varepsilon} (A_0,\lambda)\}$$ and $$\{\lambda\in\mathcal{N}:\exists A'\subseteq \Nat \exists \phi\in\Phi\; ((A,\rho_A)\equiv_{\phi,\varepsilon'} (A',\lambda)\wedge \delta^\lambda_\phi(A_0,\phi[A_0])<\varepsilon)\}.$$

Using Lemma \ref{lem_findet}, which says that the relation $\equiv_{\phi,\varepsilon}$ is determined on finite subsets for $N_0$-norms, we get that the first set is closed and the second one is open. In particular, both are $G_\delta$ since $\mathcal{N}$ is a Polish space, so closed sets are $G_\delta$. Since the union of two $G_\delta$ sets is again $G_\delta$ we get that for fixed $\varepsilon>\varepsilon'>0$, $A_0\subseteq A\subseteq \Nat$ finite and a BRFG norm $\rho_A$ on $F_A$ the condition \eqref{Gdeltacond} determines a $G_\delta$ set.

Finally one can check that all the universal quantifiers in \eqref{Gdeltacond} can be taken over countable sets which shows that \eqref{Gdeltacond} defines a $G_\delta$ subset.
\end{proof}
Next we want to show that all the norms from $\mathcal{G}$ give rise to the same normed group after the completion. Note that the condition \eqref{Gdeltacond} is similar to the condition on vector space norms which give rise to the Gurarij space, the separable Banach space of almost universal disposition constructed by Gurarij in \cite{Gu}. The following proposition is thus similar to the main result of \cite{KuSo} where the authors prove the uniqueness of the Gurarij space.
\begin{prop}
For any two $\lambda,\lambda'\in\mathcal{G}$ we have that $\overline{(G,\lambda)}$ and $\overline{(G,\lambda')}$ are isometrically isomorphic.
\end{prop}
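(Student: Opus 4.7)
The natural approach is a Fra\"iss\'e back-and-forth in the spirit of the Kubi\'s--Solecki uniqueness proof for the Gurarij space \cite{KuSo}. Fix an enumeration $G=\{g_n:n\in\Nat\}$ and a summable sequence of positive reals $\varepsilon_n\downarrow 0$ (for instance $\varepsilon_n=2^{-n}$). I would inductively build finite subsets $A_n,A'_n\subseteq\Nat$ together with bijections $\phi_n:A_n\to A'_n$ lying in $\Phi$ such that $(A_n,\lambda)\equiv_{\phi_n,\varepsilon_n}(A'_n,\lambda')$, with the crucial extra feature that the induced group isomorphisms $\bar\phi_n:F_{A_n}\to F_{A'_n}$ do not extend one another on the nose but form a Cauchy family: on each $F_{A_m}$, the sequence $(\bar\phi_n|_{F_{A_m}})_{n\geq m}$ is Cauchy in $\overline{(G,\lambda')}$, and symmetrically the ``domain labels'' drift in a Cauchy fashion inside $\overline{(G,\lambda)}$ after back steps. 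The bookkeeping interleaves forth and back moves so that every $g_n$ eventually lies, up to $\varepsilon_n$ in the respective norm, in both some $F_{A_N}$ and some $F_{A'_N}$.

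The inductive forth step adjoins an index $k\in\Nat\setminus A_n$ to the domain. Exploiting $G\cong\bigoplus_\Nat G$, one picks $k'\in\Nat\setminus A'_n$ such that $\phi_n\cup\{(k,k')\}$ extends to a group isomorphism $\bar\sigma:F_{A_n\cup\{k\}}\to F_{A'_n\cup\{k'\}}$; infinitely many siblings of $d_k$ among the generators guarantee such a $k'$ exists. Transport $\lambda$ through $\bar\sigma$ to a norm $\rho_A$ on $F_{A'_n\cup\{k'\}}$: the induction hypothesis becomes $(A'_n,\rho_A)\equiv_{\mathrm{id},\varepsilon_n}(A'_n,\lambda')$. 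Now apply \eqref{Gdeltacond} for $\lambda'$ with $A_0=A'_n$, $A=A'_n\cup\{k'\}$, $\varepsilon=\varepsilon_n$ and $\varepsilon'=\varepsilon_{n+1}$ to obtain $A''\subseteq\Nat$ and $\psi\in\Phi$ witnessing $(A'_n\cup\{k'\},\rho_A)\equiv_{\psi,\varepsilon_{n+1}}(A'',\lambda')$ together with $\delta_\psi(A'_n,\psi[A'_n])<\varepsilon_n$. Setting $A_{n+1}=A_n\cup\{k\}$, $A'_{n+1}=A''$ and $\phi_{n+1}=\psi\circ(\phi_n\cup\{(k,k')\})$, chasing the definitions yields the new approximate isometry and, via the $\delta_\psi$ clause, the Cauchy estimate $\lambda'(\bar\phi_{n+1}(f)-\bar\phi_n(f))\leq |f|\varepsilon_n$ for every $f\in F_{A_n}$. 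Back steps are symmetric, invoking \eqref{Gdeltacond} for $\lambda$ instead; there the Cauchy estimate is pushed onto the domain side, so that the new $A_{n+1}$ is an $\varepsilon_n$-perturbation of $A_n\cup\{k\}$ in the $\lambda$-metric rather than a strict superset of $A_n$.

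With the Cauchy estimates and summability of $(\varepsilon_n)$ in hand, a standard diagonal argument produces an isometric group homomorphism $\Psi:\overline{(G,\lambda)}\to\overline{(G,\lambda')}$: for each $g\in G$ one tracks its ``relabellings'' $g^{(n)}\in F_{A_n}$ through the back steps, and the images $\bar\phi_n(g^{(n)})$ form a Cauchy sequence that defines $\Psi(g)$; the approximate-isometry errors vanish in the limit, so $\Psi$ is a bona fide isometry and extends uniquely by continuity to the whole completion. Density of $\bigcup_n F_{A_n}$ in $(G,\lambda)$, engineered by the forth moves, makes the extension total; density of its image, engineered by the back moves, upgrades $\Psi$ to a surjection. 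The main technical obstacle is maintaining the Cauchy estimates coherently across back steps, where the domain labels drift by $\varepsilon_n$ rather than remaining fixed; this is precisely the purpose of the side clause $\delta_\phi(A_0,\phi[A_0])<\varepsilon$ in \eqref{Gdeltacond}, while Lemma \ref{lem_findet} is used throughout to reduce the approximate-isometry conditions to finite verifications.
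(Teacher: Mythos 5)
Your proposal is correct in outline and follows essentially the same route as the paper's proof: a Kubi\'s--Solecki-style back-and-forth that applies \eqref{Gdeltacond} alternately to $\lambda$ and $\lambda'$, uses the side clause $\delta_\phi$ to make the drifting generator labels Cauchy in the respective completions, and passes to limit generators whose spans are dense and matched by an isometric isomorphism. The only organizational difference is that the paper always adjoins the new index $i_n$ on the side that is about to be copied by \eqref{Gdeltacond} (so the input norm $\rho_A$ is just the restriction of $\lambda$ or $\lambda'$ to a genuine $F_A\leq G$), which sidesteps your thinnest step, namely the existence of a sibling $k'$ making $\phi_n\cup\{(k,k')\}$ induce an isomorphism --- a point that needs care when $d_k$ satisfies relations with $F_{A_n}$.
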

\begin{proof}
Consider two norms $\lambda,\lambda'\in\mathcal{G}$. Let $(i_j)_j$ be an enumeration of $\Nat$ with an infinite repetition.

By induction, for every $j\in \Nat$ we shall construct two finite sequences $(a_i^j)_{i=0}^{2j-1}\subseteq \Nat$ and $(b_i^j)_{i=0}^{2j}\subseteq \Nat$ such that\\
\begin{enumerate}
\item\label{item0} for every $j\in\Nat$, $i_j\in\{a_i^j:i\leq 2j-1\}\cap\{b_i^j:i\leq 2j\}$, i.e. there are $k,k'$ such that $i_j=a_k^j=b_{k'}^j$;
\item\label{item1} for every $j\in\Nat$ there are some $\phi_j\in\Phi$ such that $\phi_j(a_i^j)=b_i^j$, for every $i\leq 2j-1$, and $$((a_i^j)_{i=0}^{2j-1},\lambda)\equiv_{\phi_j,1/2^{2j-1}} ((b_i^j)_{i=0}^{2j-1},\lambda'),$$ and $\psi_j\in\Phi$ such that $\psi_j(b_i^j)=a_i^{j+1}$, for every $i\leq 2j$, and $$((b_i^j)_{i=0}^{2j},\lambda')\equiv_{\psi_j,1/2^{2j}} ((a_i^{j+1})_{i=0}^{2j},\lambda);$$
\item \label{item2} for every $j\in\Nat$ we have $$\delta^\lambda_{\psi_j\circ\phi_j}((a_i^j)_{i=0}^{2j-1},(a_i^{j+1})_{i=0}^{2j-1})<1/2^{2j-1}$$ and $$\delta^{\lambda'}_{\phi_{j+1}\circ\psi_j}((b_i^j)_{i=0}^{2j+1},(b_i^{j+1})_{i=0}^{2j+1})<1/2^{2j}.$$\\
\end{enumerate}
Note that in particular for every $j\in\Nat$ we have 
\begin{equation}\label{limmorph}
\begin{split}
a_i^{j+1}=\psi_j\circ\phi_j(a_i^j), \forall i\leq 2j-1,\\
b_i^{j+1}=\phi_{j+1}\circ\psi_j(b_i^j), \forall i\leq 2j.
\end{split}
\end{equation}

Suppose at first that such sequences have been constructed. Denote by $\Gr$ the completion of $(G,\lambda)$ and by $\Gr'$ the completion of $(G,\lambda')$. By \eqref{item2}, for each $i\in\Nat$ we have that the sequence $(g_i^j)_j$, where $g_i^j=d_{a_i^j}$ for all $i,j$, is Cauchy in $(G,\lambda)$, thus it has the limit, denoted by $g_i$, in $\Gr$. Analogously by \eqref{item2}, for each $i\in\Nat$ the sequence $(h_i^j)_j$, where $h_i^j=d_{b_i^j}$ for all $i,j$, is Cauchy in $(G,\lambda')$ and we denote by $h_i$ the limit in $\Gr'$. We claim that $\langle(g_i)_i\rangle$ is a dense subgroup in $\Gr$ and $\langle(h_i)_i\rangle$ is a dense subgroup in $\Gr'$. We prove the former, the latter is analogous. Since $G$ is dense in $\Gr$ it suffices to show that for any $g\in G$ and any $\varepsilon>0$ there exists $g'\in\langle (g_i)_i\rangle$ such that $\lambda(g'-g)<\varepsilon$. Take some finite $C\subseteq \Nat$ such that $g\in F_C$ and let $k=|g|_C$. There exists $N$ such that for every $j\geq N$ and $i\leq 2j-
1$ we have $$\lambda(g_i^j-g_i)<\varepsilon/k.$$ Also, by \eqref{item0}, for each $c\in C$ we can find $i_c$ and $j_c\geq N$ such that $g_{i_c}^{j_c}=d_c$. Since $|g|_C=k$, $\lambda(g_{i_c}^{j_c}-g_{i_c})<\varepsilon/k$, for every $c\in C$, it follows there is an element $g'\in\langle g_{i_c}:c\in C\rangle$ such that $\lambda(g'-g)<\varepsilon$, and the claim is proved.

Next we claim that the map sending $g_i$ to $h_i$, for each $i\in\Nat$, can be extended to an isometric isomorphism $\Psi:\langle g_i:i\in\Nat\rangle\rightarrow \langle h_i:i\in\Nat\rangle$. Take a finite subset $S\subseteq \Nat$ and integers $(k_i)_{i\in S}\subseteq \Int$. Then $$|\lambda(\sum_{i\in S} k_i\cdot g_i)-\lambda'(\sum_{i\in S} k_i\cdot h_i)|=\lim_{j\to\infty} |\lambda(\sum_{i\in S} k_i\cdot g^j_i)-\lambda'(\sum_{i\in S} k_i\cdot h^j_i)|\leq$$ $$\lim_{j\to\infty} \sum_{i\in S} |k_i|/2^{2j-1}=0.$$  The first equality follows from the definition, the second inequality follows from \eqref{item1}.

It follows that we may uniquely extend $\Psi$ to $\Gr$, which we shall still denote by $\Psi$ and which is an isometric isomorphism between $\Gr$ and $\Gr'$. It remains to find the sequences.\\

We will proceed by induction. We show the first odd and even steps of the induction and then the general odd and even steps of the induction.

Set $a_1^1=i_1$. Then by Claim \ref{claim1} there exists a BRFG norm $\rho_1$ on $F_{\{a_1^1\}}$ such that $(\{a_1^1\},\lambda)\equiv_{\mathrm{id},1/4} (\{a_1^1\},\rho_1)$. By \eqref{Gdeltacond}, using $A_0=\emptyset$ and $A=\{a_1^1\}$, $\varepsilon'=1/4$, $\varepsilon$ arbitrary bigger than $\varepsilon'$ and $\rho_1$, there exists $b_1^1$ such that $(\{a_1^1\},\rho_1)\equiv_{\phi_1,1/4} (\{b_1^1\},\lambda')$ for some $\phi_1\in\Phi$, thus $(\{a_1^1\},\lambda)\equiv_{\phi_1,1/2} (\{b_1^1\},\lambda')$. This finishes the first odd step.

Now if $b_1^1=i_1$ then take as $b_2^1$ an arbitrary natural number; otherwise, take $b_2^1=i_1$. By Claim \ref{claim1} there exists a BRFG norm $\rho_2$ on $F_{\{b_1^1,b_2^1\}}$ such that $(\{b_1^1,b_2^1\},\lambda')\equiv_{\mathrm{id},\delta} (\{b_1^1,b_2^1\},\rho_2)$, where $\delta<1/8$ is sufficiently small so that we still have $(\{a_1^1\},\lambda)\equiv_{\phi,1/2} (\{b_1^1\},\rho_2)$.  Then by \eqref{Gdeltacond}, using $A_0=\{b_1^1\}$ and $A=\{b_1^1,b_2^1\}$, $\varepsilon=1/2$, $\varepsilon'=1/8$ and $\rho_2$, there exist $a_1^2,a_2^2\in \Nat$ such that
\begin{itemize}
\item $(\{a_1^2,a_2^2\},\lambda)\equiv_{\phi_2,1/8} (\{b_1^1,b_2^1\},\rho_2)$, for some $\phi_2\in\Phi$,\\ thus  $(\{a_1^2,a_2^2\},\lambda)\equiv_{\phi_2,1/4} (\{b_1^1,b_2^1\},\lambda')$,
\item $\delta_{\phi_2^{-1}\circ\phi_1}(\{a_1^1\},\{a_1^2\})<1/2$; in other words, $\lambda(d_{a_1^1}-d_{a_1^2})<1/2$.
\end{itemize}

Now suppose that we have found sequences $(a_i^{n-1})_{i=0}^{2n-3}$ and $(b_i^{n-1})_{i=0}^{2n-2}$. We shall find $(a_i^n)_{i=0}^{2n-1}$ and $(b_i^n)_{i=0}^{2n}$. Since by assumption we have that $((a_i^{n-1})_{i=0}^{2n-3},\lambda)\equiv_{\phi_{n-1},1/2^{2n-3}}((b_i^n)_{i=0}^{2n},\lambda')$, again using first Claim \ref{claim1} and then \eqref{Gdeltacond} we can find $(a_i^n)_{i=0}^{2n-2}$ and $\psi_{n-1}\in\Phi$ such that $$((b_i^{n-1})_{i=0}^{2n-2},\lambda')\equiv_{\psi_{n-1},1/2^{2n-2}}((a_i^n)_{i=0}^{2n-2},\lambda),$$ and moreover $$\delta_{\psi_{n-1}\circ\phi_{n-1}}((a_i^{n-1})_{i=0}^{2n-3},(a_i^n)_{i=0}^{2n-3})<1/2^{2n-3}.$$ If $i_n\in (a_i^n)_{i=0}^{2n-2}$ then we set $a_{2n-1}^n$ to be any natural number. Otherwise, we set $a_{2n-1}^n=i_n$.

Then analogously, using Claim \ref{claim1} and \eqref{Gdeltacond}, we find $(b_i^n)_{i=0}^{2n-1}$ and $\phi_n\in\Phi$ such that $$((a_i^n)_{i=0}^{2n-1},\lambda)\equiv_{\phi_n,1/2^{2n-1}}((b_i^n)_{i=0}^{2n-1},\lambda'),$$ and moreover $$\delta_{\phi_n\circ\psi_{n-1}}((b_i^{n-1})_{i=0}^{2n-2},(b_i^n)_{i=0}^{2n-2})<1/2^{2n-2}.$$ Again, if $i_n\in (b_i^n)_{i=0}^{2n-1}$ then we set $b_{2n}^n$ to be any natural number. Otherwise, we set $b_{2n}^n=i_n$. This finishes the induction and the proof.
\end{proof}

To finish the proof of Theorem \ref{mainthm} we need to prove that $\mathcal{G}$ is dense. Notice that so far we have not yet even proved that $\mathcal{G}$ is non-empty. The next proposition will do.
\begin{prop}\label{denseprop}
$\mathcal{G}$ is dense.

Moreover, if $G$ is unbounded, then the subset $\{\lambda\in\mathcal{G}:(G,\lambda)\cong_{\mathrm{iso}} \Rat\mathbb{U}\}\subseteq \mathcal{G}$ is dense.
\end{prop}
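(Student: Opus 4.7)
The approach is a Fraïssé-style back-and-forth construction producing $\lambda = \bigcup_n \lambda_n$, where each $\lambda_n$ is a rational partial norm on $F_{B_n}$ with $B_n \uparrow \Nat$. Starting from a basic open set in $\mathcal{N}$ determined by a finite symmetric $F \subseteq G$, a partial norm $\rho$ on $F$, and $\varepsilon > 0$, we use Lemmas \ref{extnorm_lem} and \ref{ratpartnorm} to replace the datum by a rational partial norm $\lambda_0$ on some $F_{B_0} \supseteq F$ that is $\varepsilon/2$-close to $\rho$ on $F$. We then enumerate with infinite repetition all rational requests $R_k = (A_0^k, A^k, \rho_A^k, \varepsilon_k > \varepsilon'_k)$, where $A_0^k \subseteq A^k \subseteq \Nat$ are finite, $\rho_A^k$ is a rational partial norm on $F_{A^k}$, and $\varepsilon_k, \varepsilon'_k$ are positive rationals. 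That this countable family suffices for establishing \eqref{Gdeltacond} in the limit follows from Lemma \ref{lem_findet}, which makes the compatibility premise an open condition.

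At stage $n$, assuming $A_0^n \subseteq B_n$ and $(A_0^n, \rho_A^n) \equiv_{\mathrm{id},\varepsilon_n} (A_0^n, \lambda_n)$, we exploit $G \cong \bigoplus_\Nat G$ to pick a fresh $A' \subseteq \Nat \setminus B_n$ whose generators lie in summands $G_k$ disjoint from those containing $\{d_j : j \in B_n\}$ and for which $A^n \equiv_\phi A'$ for some $\phi \in \Phi$; infinite-summedness guarantees such an $A'$ exists. This choice makes $F_{B_n \cup A'} = F_{B_n} \oplus F_{A'}$ an internal direct sum, which is the key structural ingredient enabling the extension. We then define $\lambda_{n+1}$ on $F_{B_n \cup A'}$ via Fact \ref{getnormfact} applied to the pre-norm that equals $\lambda_n$ on $F_{B_n}$, equals $\rho_A^n \circ \bar{\phi}^{-1}$ on $F_{A'}$, and sends each cross-generator $d_a - d_{\phi(a)}$ (for $a \in A_0^n$) to a rational value $s_a \in (|\lambda_n(d_a) - \rho_A^n(d_a)|,\, \varepsilon_n)$. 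Interleaved stages enlarge $B_n$ (using Lemma \ref{extnorm_lem}) to exhaust $\Nat$, so $\lambda = \bigcup_n \lambda_n$ is a genuine norm on $G$.

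The main obstacle is to verify that the greatest partial norm produced by Fact \ref{getnormfact} reproduces $\lambda_n$ on $F_{B_n}$ without shrinkage and is $\varepsilon'_n$-close to $\rho_A^n \circ \bar{\phi}^{-1}$ on $F_{A'}$. Because the amalgamation is a direct sum, any decomposition of an element of $F_{B_n}$ that strays into $F_{A'}$ must cross the boundary an equal number of times in each direction, and pairing the crossings produces terms of the form $\bar{\phi}(y) - y$ whose combined cross-cost is bounded below by $|\rho_A^n(y) - \lambda_n(y)|$ via the triangle inequality. The lower bound $s_a \geq |\lambda_n(d_a) - \rho_A^n(d_a)|$ is precisely what prevents shrinkage on the $F_{B_n}$-side, while the upper bound $s_a < \varepsilon_n$, combined with Lemma \ref{lem_findet} applied to the finite verification set attached to $(A^n, \bar{\phi})$, yields the $\varepsilon'_n$-approximation on the $F_{A'}$-side once $\varepsilon_n$ is chosen small relative to the maximal $|\cdot|_{A^n}$-length in that finite set. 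With these estimates in hand, $(A', \phi)$ witnesses \eqref{Gdeltacond} for the request $R_n$.

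For the moreover part, when $G$ is unbounded, we add a third interleaved stream of stages performing Katětov extensions exactly as in the proof of Theorem \ref{Urysohnnorms}: at such a stage we apply Lemma \ref{unbddgrp_lem} to select $g \in G$ with $\dist(g, F_{B_n})$ exceeding the ratio required by Proposition \ref{Katetovext_prop}, realize an enumerated rational Katětov function on a subset of $F_{B_n}$, and extend via Lemma \ref{extnorm_lem} so that the domain is again of the form $F_{B_{n+1}}$. Because Katětov stages introduce genuinely fresh elements at large group-theoretic distance from everything constructed so far, they do not interact with the disjoint-summand hypothesis or the cross-distance prescriptions of the Fraïssé stages. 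In the limit, Fact \ref{isUryfact} gives $(G, \lambda) \cong \Rat\mathbb{U}$, while the Fraïssé stages secure $\lambda \in \mathcal{G}$, establishing the stronger density claim.
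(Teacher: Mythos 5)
Your overall architecture matches the paper's: reduce to a rational finitely generated initial datum, enumerate rational requests with infinite repetition, use infinite-summedness to plant a fresh disjoint copy $F_{A'}$ so that $F_{B_n\cup A'}=F_{B_n}\oplus F_{A'}$, glue with Fact \ref{getnormfact}, and interleave Kat\v etov stages for the ``moreover'' part. However, there is a genuine gap in the one step you yourself flag as the main obstacle: the choice of the cross-generator values $s_a$. You take $s_a$ just above $|\lambda_n(d_a)-\rho_A^n(d_a)|$ and claim that, after pairing the boundary crossings, the combined cross-cost of transporting a word $y$ is bounded below by $|\rho_A^n(y)-\lambda_n(y)|$ ``via the triangle inequality''. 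That inequality is false: per-generator differences do not control differences on longer words. Concretely, take $A_0=\{1,2\}$ with $\lambda_n(d_1)=\lambda_n(d_2)=\rho(d_1)=\rho(d_2)=1$, $\lambda_n(d_1+d_2)=2$, $\rho(d_1+d_2)=1/2$; the hypothesis $(A_0,\rho)\equiv_{\mathrm{id},\varepsilon_n}(A_0,\lambda_n)$ holds for any $\varepsilon_n>3/4$, while $|\lambda_n(d_a)-\rho(d_a)|=0$, so your $s_a$ may be an arbitrarily small $\delta$. Then $d_1+d_2=(d_1-\bar\phi(d_1))+(d_2-\bar\phi(d_2))+(\bar\phi(d_1)+\bar\phi(d_2))$ is a decomposition of $\chi$-cost $2\delta+1/2<2$, so the greatest partial norm of Fact \ref{getnormfact} shrinks $\lambda_n(d_1+d_2)$ and $\lambda_{n+1}$ fails to extend $\lambda_n$; the symmetric computation shows the new copy also fails to carry $\rho_A^n$ within $\varepsilon'_n$.

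The fix is exactly the point of the paper's choice $h_c=\min\{1/2^n,\lambda_n(d_c)+\rho_n(d_c)\}$: the cross-value must be (essentially) the full closeness parameter of the hypothesis, capped only by the trivial bound $\lambda_n(d_c)+\rho_n(d_c)$ forced by the triangle inequality. Then a word $y$ of length $|y|$ pays $|y|$ crossings at $1/2^n$ each, and the hypothesis $(A_0,\rho)\equiv_{\mathrm{id},1/2^n}(A_0,\lambda_n)$ --- which bounds $|\lambda_n(y)-\rho(y)|$ by $|y|/2^n$, not by a sum of per-generator differences --- is precisely what makes the cross-cost absorb the discrepancy (the degenerate case $h_c=\lambda_n(d_c)+\rho_n(d_c)<1/2^n$ is handled by replacing such a cross-term by the pair $d_c,-\bar\phi(d_c)$). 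A secondary symptom of the same confusion is your remark that the $\varepsilon'_n$-approximation on the $F_{A'}$-side is obtained ``once $\varepsilon_n$ is chosen small'': in condition \eqref{Gdeltacond} both $\varepsilon$ and $\varepsilon'$ are universally quantified and handed to you, so you cannot shrink $\varepsilon_n$; with the correct cross-values the new copy in fact carries $\rho_A^n$ exactly ($\equiv_{\phi,0}$), which disposes of this issue. The remainder of your outline (density from the arbitrary initial basic open set, and the interleaved Kat\v etov stream for the unbounded case) is in line with the paper.
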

\begin{proof}
Fix a basic open set in $\mathcal{N}$. It is given by some partial norm $\rho$ on some, without loss of generality, finite symmetric subset $F$ of $G$ containing zero, and some $\varepsilon>0$. We may suppose that $F$ is such that for some finite $C\subseteq \Nat$ we have $\langle F\rangle=F_C$. We use Lemma \ref{ratpartnorm} to get a rational partial norm $\rho'_R$ on $F$ such that $|\rho(f)-\rho'_R(f)|<\varepsilon$ for all $f\in F$. Then we use Lemma \ref{extnorm_lem} to extend it to a rational finitely generated norm on $\langle F\rangle=F_C$ and finally we make it bounded by some rational constant to get a BRFG norm $\rho_R$ on $F_C$ that agrees with $\rho'_R$ on $F$.\\

Let us now enumerate all triples $T_n=(B_n,A_n,\rho_n)$, where $B_n\subseteq A_n\subseteq \Nat$ are finite and $\rho_n$ is a BRFG norm on $F_{A_n}$. Moreover, suppose that there is an infinite repetition of each such a triple in the enumeration.

By induction, we shall construct an increasing sequence of finite sets $(C_n)_n$, i.e. $C_n\subseteq C_m$, for $n<m$, and an increasing sequence of bounded rational norms $(\lambda_n)_n$, i.e. $\lambda_n\subseteq \lambda_m$, for $n<m$, such that
\begin{enumerate}
\item $C_1=C$ and $\lambda_1=\rho_R$;
\item $\bigcup_n C_n=\Nat$;
\item for each $n$, $\lambda_n$ is a norm on $F_{C_n}$;
\item for every $n$, if there are $B'_n\subseteq C_n$ and $\phi'$ such that $$(B'_n,\lambda_n)\equiv_{\phi',1/2^n} (B_n,\rho_n),$$ then there are $A'_n\subseteq C_{n+1}$ and $\phi$ such that $$(A_n,\rho_n)\equiv_\phi (A'_n,\lambda_{n+1}),$$ i.e. $\phi$ induces an isometric isomorphism between $(F_{A_n},\rho_n)$ and $(F_{A'_n},\lambda_{n+1})$, and $$\delta_{\phi\circ\phi'}(B'_n,\phi\circ\phi'[B'_n])<1/2^n.$$\\
\end{enumerate}

We shall now proceed to the induction. The first step has been already done, i.e. we set $C_1=C$ and $\lambda_1=\rho_R$ as obtained from the claim above.

Let us now describe the general step. Suppose we have produced a finite set $C_n$ and a norm $\lambda_n$ on $F_{C_n}$ for $n\geq 1$. Consider now the triple $T_n=(B_n,A_n,\rho_n)$. Suppose that there are some $B'_n\subseteq C_n$ and $\phi'\in\Phi$ such that $\phi'[B'_n]=B_n$ and $(B'_n,\lambda_n)\equiv_{\phi',1/2^n} (B_n,\rho_n)$. There can be at most finitely many such $B'_n$'s. To simplify the notation and proof, we shall suppose there is just one such $B'_n\subseteq C_n$, and actually $B'_n=B_n$, and thus $\phi'=\mathrm{id}$. If there are more such finite subsets of $C_n$, the procedure that follows is repeated (finitely many times). If there is no such a finite subset $B'_n$, then we set $C'_{n+1}=C_n$ and $\lambda'_{n+1}=\lambda_n$ and use the procedure below to extend $C'_{n+1}$ to $C_{n+1}$ and $\lambda'_{n+1}$ to $\lambda_{n+1}$.

Thus we suppose that $B_n\subseteq C_n$ and $(B_n,\lambda_n)\equiv_{\mathrm{id},1/2^n} (B_n,\rho_n)$. Since $G$ is infinitely-summed we can find $A'_n$ such that there is some $\phi\in\Phi$ which is a bijection between $A_n$ and $A'_n$ inducing an isomorphism $\bar{\phi}$ between $F_{A_n}$ and $F_{A'_n}$, and $F_{C_n}\cap F_{A'_n}=\{0\}$. Set $C'_{n+1}=C_n\cup A'_n$. We shall now define a bounded rational norm $\lambda'_{n+1}$ on $F_{C'_{n+1}}$ which extends $\lambda_n$. 

For each $c\in B_n$, set $h_c$ to be $\min\{1/2^n,\lambda_n(d_c)+\rho_n(d_c)\}$. For $x\in F_{C_n}\cup F_{A'_n}\cup\{d_c-\bar{\phi}(d_c),\bar{\phi}(d_c)-d_c:c\in B_n\}$ we set $$\chi (x)=\begin{cases} \lambda_n(x) & \text{if }x\in F_{C_n},\\
\rho_n(y) & \text{if } x=\bar{\phi}(y),y\in F_{A_n},\\
h_c & \text{if } x=\varepsilon(d_c-\bar{\phi}(d_c)),\text{ where }c\in B_n,\varepsilon\in\{1,-1\}.
\end{cases}$$

We use Fact \ref{getnormfact} to get a partial norm $\lambda'_{n+1}$ on $F_{C_n}\cup F_{A'_n}\cup\{d_c-\bar{\phi}(d_c),\bar{\phi}(d_c)-d_c:c\in B_n\}$. Note that it is bounded by some $K$. Also note that for each $c\in B_n$ we have $\lambda'_{n+1}(d_c-\bar{\phi}(d_c))\leq 1/2^n$. We need to check that for each $x\in F_{C_n}\cup F_{A'_n}$ we have $\lambda'_{n+1}(x)=\chi(x)$. Then we could use Lemma \ref{extnorm_lem} again to extend $\lambda'_{n+1}$ to a norm bounded by $K$ on $F_{C'_{n+1}}$ still denoted by $\lambda'_{n+1}$. It will follow that $(A_n,\rho_n)\equiv_\phi (A'_n,\lambda'_{n+1})$ and that $\delta_\phi(B_n,\phi[B_n])\leq 1/2^n$.

So fix some $x\in F_{C_n}\cup F_{A'_n}$. We need to check that for any $x_1,\ldots,x_k\in F_{C_n}\cup F_{A'_n}\cup\{d_c-\bar{\phi}(d_c),\bar{\phi}(d_c)-d_c:c\in B_n\}$ such that $x=\sum_{i=1}^k x_i$ we have $\chi(x)\leq \sum_{i=1}^k \chi(x_i)$.

We have two cases: $x\in F_{C_n}$ and $x\in F_{A'_n}$. We shall treat only the first one, the second is analogous.\\

So we suppose that $x\in F_{C_n}$. Since $G$ is abelian we may suppose that there are $k_1,k_2$ such that $0\leq k_1\leq k_2\leq k$, for every $1\leq i\leq k_1$ we have $x_i\in F_{C_n}$, for every $k_1<i\leq k_2$ we have $x_i\in F_{A'_n}$ and for every $k_2<i\leq k$ we have $x_i\in\{d_c-\bar{\phi}(d_c),\bar{\phi}(d_c)-d_c:c\in B_n\}$. Moreover, for every $k_2<i\leq k$ we may suppose that $\chi(x_i)=1/2^n$. Otherwise, $\chi(x_i)=\lambda_n(d_c)+\rho_n(d_c)$, for some $c\in B_n$, i.e. $x_i$ is equal to $d_c-\bar{\phi}(d_c)$ or $\bar{\phi}(d_c)-d_c$. In that case we would replace $x_i$ by a pair $d_c$, $-\bar{\phi}(d_c)$, resp. $\bar{\phi}(d_c)$, $-d_c$ without increasing the sum $\sum_{i=1}^k \chi(x_i)$. Set $z_1=\sum_{i=1}^{k_1} x_i$, $z_2=\sum_{i=k_1+1}^{k_2} x_i$ and $z_3=\sum_{i=k_2+1}^k x_i$. Since $x=z_1+z_2+z_3$, it follows that $z_3=-z_2+\bar{\phi}^{-1}(z_2)$. Since $(B_n,\lambda_n)\equiv_{\mathrm{id},1/2^n} (B_n,\rho_n)$ we get that $\bar{\phi}^{-1}(z_2)\in F_{B_n}$ and $|\lambda_n(\bar{\phi}^{-1}(z_2))-\rho_n(\bar{\phi}^{-1} (z_2))|<|z_2|/2^n$. Since $$\sum_{i=k_2+1}^k \chi(x_i)\geq |z_2|/2^n,$$
we get that $$\sum_{j=1}^k \chi(x_j)\geq \lambda_n(z_1)+\rho_n(\bar{\phi}^{-1}(z_2))+|z_2|/2^n\geq \lambda_n(x)$$ because $x-z_1=z_2+z_3=\bar{\phi}^{-1}(z_2)$, and we are done.\\

Finally, set $C_{n+1}=C'_{n+1}\cup\{n\}$ and extend $\lambda'_{n+1}$ to a bounded rational norm $\lambda_{n+1}$ on $F_{C_{n+1}}$ arbitrarily.\\

When the induction is finished we get $\lambda=\bigcup_n \lambda_n$ is a norm on $G$. We check that $\lambda\in\mathcal{G}$. First, since it is a direct limit of bounded norms, clearly it is an $N_0$-norm. Now take any $\varepsilon>\varepsilon'>0$, finite subsets $A_0\subseteq A\subseteq \Nat$. Let $\rho_R$ be some BRFG norm on $F_A$ such that $(A_0,\rho_R)\equiv_{\mathrm{id},\varepsilon} (A_0,\lambda)$. Then by the construction, we can find $n$ such that $A\subseteq C_n$ and $T_n=(A_0,A,\rho_R)$, where $1/2^n<\varepsilon$. By the inductive construction, there is some $\phi$ such that $A\equiv_\phi \phi[A]$, $\phi[A]\subseteq C_{n+1}$ and $$(A,\rho_R)\equiv_{\phi,\varepsilon'} (\phi[A],\lambda)$$ and $$\delta_\phi(A,\phi[A])<1/2^n<\varepsilon,$$ and we are done.

Also, since at the beginning the finite symmetric subset $F$ and a partial norm $\rho$ on $F$ were arbitrary, it shows that $\mathcal{G}$ is dense.\\

Finally, we show how to get the ``moreover" part from the statement of the proposition. If $G$ is unbounded then we combine the two induction procedures from this proof and the proof of Theorem \ref{Urysohnnorms} into one. Besides the enumeration of triples $(T_n)_n$ as above, consider also the enumeration $\{(A_i,f_i):i\in\Nat\}$ (again with infinite repetition) of all pairs $(A,f)$, where $A$ is a finite rational metric space and $f:A\rightarrow \Rat$ a rational Kat\v etov function over $A$. Then we divide the induction procedure into odd and even steps. During odd steps, we take care of triples $(T_n)_n$ as above. During even steps, we take care of pairs $(A_n,f_n)_n$ as in the proof of Theorem \ref{Urysohnnorms}, just ensuring that the norm is bounded after every step. It follows that after the induction we get a norm $\lambda\in \mathcal{G}$ such that $(G,\lambda)$ is isometric to the rational Urysohn space.
\end{proof}
\begin{cor}
Let $G$ be an unbounded infinitely-summed countable Abelian group. There exists an Abelian Polish metric group $\Gr$ which is extremely amenable and isometric to the Urysohn space such that for comeager-many norms $\lambda$ on $G$ we have $$\Gr=\overline{(G,\lambda)}.$$

In particular, for every such $G$ there is a norm $\lambda$ such that $(G,\lambda)$ is extremely amenable and isometric to the rational Urysohn space.
\end{cor}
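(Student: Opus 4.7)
The strategy is to realize $\Gr$ as the common completion produced by three intersecting dense $G_\delta$ subsets of $\mathcal{N}_G$. Let $\mathcal{G} \subseteq \mathcal{N}_G$ be the set built in Theorem \ref{mainthm} (the ``generic'' norms all giving isometrically isomorphic completions), let $\mathbb{E} \subseteq \mathcal{N}_G$ be the set of extremely amenable norms from Theorem \ref{thmMeTs}, and let $\mathcal{U} \subseteq \mathcal{N}_G$ be the set of norms whose completion is isometric to $\mathbb{U}$ from Corollary \ref{corollaryUry}. All three are dense $G_\delta$, so by the Baire category theorem in the Polish space $\mathcal{N}_G$ the intersection $\mathcal{G} \cap \mathbb{E} \cap \mathcal{U}$ is again dense $G_\delta$, in particular comeager and nonempty.

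Fix any $\lambda_0 \in \mathcal{G} \cap \mathbb{E} \cap \mathcal{U}$ and set $\Gr := \overline{(G,\lambda_0)}$. Membership of $\lambda_0$ in $\mathcal{U}$ shows that $\Gr$ is isometric to $\mathbb{U}$; membership in $\mathbb{E}$, combined with density of $G$ in $\Gr$, yields that $\Gr$ is extremely amenable, since any continuous action of $\Gr$ on a compact Hausdorff space restricts to a continuous action of $(G,\lambda_0)$, and the fixed point guaranteed by extreme amenability of $(G,\lambda_0)$ is then fixed by all of $\Gr$ by continuity and density; finally membership in $\mathcal{G}$ gives via Theorem \ref{mainthm} that $\Gr$ agrees (as an abelian Polish metric group) with $\overline{(G,\lambda)}$ for every $\lambda \in \mathcal{G}$. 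Thus $\Gr = \overline{(G,\lambda)}$ holds throughout the comeager set $\mathcal{G} \cap \mathbb{E} \cap \mathcal{U}$, which establishes the first assertion.

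For the ``in particular'' statement, the moreover part of Proposition \ref{denseprop} supplies an explicit $\lambda \in \mathcal{G}$ with $(G,\lambda) \cong_{\mathrm{iso}} \Rat\mathbb{U}$. Then $\overline{(G,\lambda)} = \Gr$ is, by the first part, isometric to $\mathbb{U}$ and extremely amenable; what remains is to transfer extreme amenability back from the completion to the dense subgroup. This is standard: a continuous homomorphism between topological groups is automatically uniformly continuous with respect to their canonical uniformities, so any continuous action of $(G,\lambda)$ on a compact metrizable $K$, viewed as a continuous homomorphism into the Polish topological group $\mathrm{Homeo}(K)$, extends uniquely and continuously to $\Gr$; the extension has a fixed point by extreme amenability of $\Gr$, and that point is also fixed by $G \leq \Gr$. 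The only substantive point that is not black-boxed from earlier results is precisely this bidirectional equivalence of extreme amenability between a metric group and its completion, and both directions are routine consequences of density together with the uniform continuity of continuous group homomorphisms.
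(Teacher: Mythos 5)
Your proposal is correct and is essentially the paper's intended argument: the corollary is stated without a separate proof precisely because it follows by intersecting the comeager sets from Theorem \ref{mainthm}, Corollary \ref{corollaryUry} and Theorem \ref{thmMeTs} (the paper already notes, just before Theorem \ref{mainthm}, that a generic norm on an unbounded group must be extremely amenable by exactly this Baire-category intersection), and by invoking the ``moreover'' part of Proposition \ref{denseprop} for the rational Urysohn statement. The only point worth recording is the one you correctly identify as standard, namely that extreme amenability passes between a metric group and its completion in both directions.
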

\begin{remark}
Although the results above show that for every infinitely-summed countable Abelian group $G$ there is a corresponding Abelian Polish metric group $\mathbb{G}$, one might ask whether there is actually \emph{a single} generic Abelian Polish metric group $\mathbb{H}$. That is, whether for every infinitely-summed countable Abelian group $G$ and a generic metric $\rho$ on $G$, we have $\overline{(G,\rho)}=\mathbb{H}$.

Clearly, if $G_1$ and $G_2$ are two infinitely-summed countable abelian groups of bounded torsion, where the bounds are different for $G_1$ and $G_2$ respectively, then the corresponding generic metrics cannot give the same group after completion. However, it is reasonable to expect that the groups $\mathbb{G}_\infty(N)$, where $N\in\{0,2,3,4,\ldots\}$, from \cite{Nie1}, are the only generic abelian Polish groups. We remark here that $\mathbb{G}_\infty(0)$, constructed in \cite{Nie1}, is the completion of the Fra\" iss\' e limit of all finite abelian groups with rational norms. For $N\in\{2,3,4,\ldots\}$, $\mathbb{G}_\infty(N)$ is the completion of the Fra\" iss\' e limit of all finite abelian groups of exponent $N$ with rational norms. We refer the reader to \cite{Nie1} for more details.
\end{remark}
\subsection{Extremely amenable universal abelian group} In this section, we observe that the universal abelian Polish group of Shkarin from \cite{Sk}, further investigated by Niemiec in \cite{Nie1}, is extremely amenable, and provide another proof that it is, with its norm, isometric to the Urysohn space. More precisely, we formulate a certain extension property of this group (analogous to the extension property of the Gurarij Banach space) and show that this extension property is a dense $G_\delta$ property. Then we use our results and results of Melleray and Tsankov to show that the extension property defines this group uniquely up to isometric isomorphism, and that the group is extremely amenable and isometric to the Urysohn space.

Since the group is essentially constructed using Fra\" iss\' e theoretic methods, we shall assume here that the reader has a basic knowledge of this area. We refer the reader to Chapter 7 in \cite{Ho} as a reference to the Fra\" iss\' e theory.\\

We note that the group is universal in the sense that every abelian Polish group, or every second-countable abelian Hausdorff group, embeds via topological isomorphism as a subgroup. The group was constructed as a normed group, however it is universal only in the topological sense. It does not contain \emph{isometrically} every separable abelian normed group as a subgroup. A normed group universal in this stronger sense was constructed by the author in \cite{Do1}.\\

Let us describe the construction of the Shkarin's group, further denoted by $\Gr_S$: First one considers the class of all finite abelian groups equipped with rational-valued norms. Then one can check that this class is a Fra\" iss\' e class, thus it has a Fra\" iss\' e limit. It is straightforward to check that the limit is a countable abelian group, denoted by $G_S$, which is algebraically isomorphic to $\bigoplus_\Nat \Rat/\Int$, equipped with a rational norm $\lambda_S$. In particular, $G_S$ is an infinitely-summed unbounded group. Then one takes the completion $\overline{(G_S,\lambda_S)}$ to obtain the group $\Gr_S$.

In the sequel, we shall use the following notation. For two finite groups $G,H$ which are isomorphic via some $\phi$, and which are equipped with norms $\lambda_G$ and $\lambda_H$ respectively, and for some $\varepsilon>0$ we shall write $$(G,\lambda_G)\equiv_{\phi,\varepsilon} (H,\lambda_H)$$ to express that they are $\varepsilon$-isomorphic via $\phi$, i.e. for every $g\in G$ we have $|\lambda_G(g)-\lambda_H(\phi(g))|<\varepsilon$.\\

Consider now the following set $\mathbb{S}$ of norms $\lambda$ on $G_S=\bigoplus_\Nat \Rat/\Int$, i.e. a subset of $\mathcal{N}_{G_S}$:
\begin{equation}\label{isShkarin}
\begin{split}
\forall \varepsilon>0 \forall G_0\leq G_1\leq G_S\text{ finite},\forall \rho\text{ a rational norm on }G_1\\
\text{if }(G_0,\rho)\equiv_{\mathrm{id},\varepsilon} (G_0,\lambda)\text{ then }\exists G'_1, G_0\leq G'_1\leq G_S,\text{ isomorphic to } G_1\\ \text{ via some }\phi\text{ such that } (G'_1,\lambda)\equiv_{\phi,\varepsilon} (G_1,\rho).
\end{split}
\end{equation}

First, it is clear, by the definition of a Fra\" iss\' e limit, that $\lambda_S\in\mathbb{S}$; moreover, that the set $\mathbb{S}$ is dense. Secondly, one can check as in the proof of Lemma \ref{isGdelta_lem} that \eqref{isShkarin} defines a $G_\delta$ subset of $\mathcal{N}_{G_S}$. Thus $\mathbb{S}$ is a dense $G_\delta$ set.

It also follows from the proofs of Shkarin and Niemiec that for any $\lambda\in \mathbb{S}$, the group $\overline{(G_S,\lambda)}$ is also universal replicating the approximation arguments for $\lambda_S$. It is analogous to the case of metrics on a countable set satisfying \eqref{Uryaftercompl} from the proof of Corollary \ref{corollaryUry}. Then applying Theorem \ref{mainthm}, Corollary \ref{Uryaftercompl} and Theorem \ref{thmMeTs} of Melleray and Tsankov we get the following corollary.
\begin{cor}
There exists a generic norm $\lambda$ on $G_S=\bigoplus_\Nat \Rat/\Int$ such that $\overline{(G_S,\lambda)}$ is the Shkarin's group, which is thus extremely amenable and as a metric space with the metric induced by $\lambda$ isometric to the Urysohn space.
\end{cor}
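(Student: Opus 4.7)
The plan is to intersect four comeager subsets of $\mathcal{N}_{G_S}$ and extract a single witnessing norm. First I would verify the hypotheses on $G_S = \bigoplus_\Nat \Rat/\Int$: it is unbounded, since $\Rat/\Int$ contains cyclic subgroups of every finite order, and it is infinitely-summed, since $\bigoplus_\Nat G_S \cong G_S$ by a trivial reindexing. Thus $G_S$ satisfies the hypotheses of Theorem \ref{mainthm}, Theorem \ref{thmMeTs}, and Corollary \ref{corollaryUry}. Next I would collect four subsets of $\mathcal{N}_{G_S}$: the set $\mathcal{G}$ of norms satisfying \eqref{Gdeltacond}, dense $G_\delta$ by Proposition \ref{denseprop} and the lemma preceding it, on which all completions coincide with a fixed Polish group $\Gr$; the comeager set $\mathcal{U}=\{\lambda:\overline{(G_S,\lambda)}\cong\mathbb{U}\}$ from Corollary \ref{corollaryUry}; the comeager set $\mathbb{E}$ of extremely amenable norms from Theorem \ref{thmMeTs}; and the set $\mathbb{S}$ defined by \eqref{isShkarin}, which the text notes is dense $G_\delta$. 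By the Baire category theorem the intersection $\mathcal{G}\cap\mathcal{U}\cap\mathbb{E}\cap\mathbb{S}$ is comeager, hence non-empty.

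I would then pick any $\lambda$ in this intersection and read off the conclusions. Membership in $\mathcal{G}$ makes $\lambda$ generic in the sense of Section 3, with completion the fixed Polish group $\Gr$. Membership in $\mathbb{S}$ gives, by replicating the standard Fra\"iss\'e back-and-forth of Shkarin and Niemiec inside the completion (the same approximation mechanism by which \eqref{Uryaftercompl} forces the completion to be $\mathbb{U}$ in Corollary \ref{corollaryUry}), that $\overline{(G_S,\lambda)}$ is isometrically isomorphic to the Shkarin group $\Gr_S$, so $\Gr\cong\Gr_S$. Membership in $\mathcal{U}$ identifies $\Gr_S$ with $\mathbb{U}$ as a metric space, and membership in $\mathbb{E}$ makes $(G_S,\lambda)$ extremely amenable, which propagates to $\Gr_S$ since $G_S$ is dense in its completion.

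The only non-citational step is confirming that $\mathbb{S}$ is really dense $G_\delta$ and that every $\lambda\in\mathbb{S}$ completes to $\Gr_S$. The $G_\delta$ part is routine quantifier counting: the existential in \eqref{isShkarin} is an open condition, and the universal quantifiers can be restricted to countable dense subsets. The density of $\mathbb{S}$ is the expected mild obstacle; it uses the infinitely-summed structure of $G_S$ to house disjoint isomorphic copies of the finite subgroups appearing as test data, exactly as in the proof of Proposition \ref{denseprop}, so that one can amalgamate a pair $(G_0\leq G_1,\rho)$ against any prescribed norm at scale $\varepsilon$. Once these two properties are granted, the four-way intersection argument above delivers the required generic $\lambda$.
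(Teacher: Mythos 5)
Your proposal is correct and follows essentially the same route as the paper, which likewise obtains the witness by intersecting the comeager sets coming from Theorem \ref{mainthm}, Corollary \ref{corollaryUry}, Theorem \ref{thmMeTs}, and the dense $G_\delta$ set $\mathbb{S}$ of \eqref{isShkarin}. If anything, you are slightly more explicit than the paper about the one step it leaves implicit, namely that the approximate back-and-forth shows \emph{every} $\lambda\in\mathbb{S}$ has completion isometrically isomorphic to $\Gr_S$ (the paper only records that such completions are universal), and that identification is indeed what the statement of the corollary requires.
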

\begin{remark}
On the other hand, the metrically universal abelian group constructed by the author in \cite{Do1} is not generic. This can be immediately seen as the norm on that group cannot be an $N_0$-norm.
\end{remark}
\section{Problems}
The result of Melleray and Tsankov cannot be used to prove that the metrically universal group from \cite{Do1} is extremely amenable. That does not mean it is not possible though. We note that although the results of this paper cannot be directly used to show that the metrically universal group is isometric to the Urysohn space, it was nevertheless proved in \cite{Do1}. So we ask:
\begin{question}
Is the metrically universal group from \cite{Do1} extremely amenable?
\end{question}

Another challenging problem is to investigate similar properties of the spaces of metrics on non-abelian countable groups. There one can distinguish two cases, which coincide in the case of abelian groups: the space of all \emph{continuous} left-invariant metrics (we comment on the `continuity' below) and the space of bi-invariant metrics. In the case of bi-invariant metrics, one can again easily check that for any countable group $G$, the set of all bi-invariant metrics, is a closed subset $\Rea^{G\times G}$, thus a Polish space.

We do not know if there is some countable non-abelian group which admits a generic bi-invariant metric. Also, we do not know whether for some countable non-abelian group the subset of bi-invariant metrics with which this group is extremely amenable is comeager; i.e. we do not know whether it is possible to generalize the Melleray and Tsankov's result to the non-abelian situation.\\

Regarding the general left-invariant metrics, first thing to observe is that while bi-invariant metrics make the group operations continuous, in fact Lispchitz, this is no longer true for general left-invariant metrics. The corresponding general norms on groups (that do not necessarily make the group topological) were considered in the literature, see e.g. \cite{BiOs}. However, in most cases it is reasonable to consider only such metrics that do make the group operations continuous. For a group $G$ and a left-invariant metric $d$ on $G$, the group operations are continuous if and only if for every $g\in G$ and every $\varepsilon>0$ there is $\delta>0$ such that for every $h\in G$, if $\lambda_d(h)<\delta$, then $\lambda_d(g^{-1}\cdot h\cdot g)<\varepsilon$, where $\lambda_d$ is the corresponding norm. Let us call such norms and metrics continuous.

The main problem is that we do not know how to code the continuous norms and metrics on a non-abelian group as a Polish space. Indeed, the straightforward computation gives that they form an $F_{\sigma \delta}$ subset of $\Rea^G$ ($\Rea^{G^2}$), thus not necessarily a space with Polish topology. Even some special subsets of continuous norms such as uniformly discrete norms seem not to be Polish, but rather $F_\sigma$ subsets of $\Rea^G$. A special subclass of uniformly discrete norms, often considered in geometric group theory, that is, the class of all proper norms again seems to be $F_{\sigma\delta}$.

However, we do not exclude the possibility that a better computation reveals that these are Polish spaces --- in a natural way.\\

\noindent{\bf Acknowledgement:} The author was supported by the GA\v CR project 16-34860L and RVO: 67985840, and by the region Franche-Comt\' e.


\begin{thebibliography}{15}
\bibitem{BiOs}
N. H. Bingham, A. J. Ostaszewski, \emph{Normed versus topological groups: dichotomy and duality}, Dissertationes Math. (Rozprawy Mat.) 472 (2010).
\bibitem{CaVe}
P.J. Cameron, A.M. Vershik, \emph{Some isometry groups of the Urysohn space}, Ann. Pure Appl. Logic 143 (2006), no. 1-3, 70-78.
\bibitem{Do1}
M. Doucha, \emph{Metrically universal abelian groups}, Trans. Amer. Math. Soc. 369 (2017), 5981--5998.
\bibitem{Do2}
M. Doucha, \emph{Metrical universality for groups}, Forum Math. 29 (2017), no. 4, 847--872.
\bibitem{Gu}
V.I. Gurarij, \emph{Spaces of universal placement, isotropic spaces and a problem of Mazur on rotations of Banach spaces}, Sibirsk. Mat. Zh. 7 (1966) 1002--1013 (in Russian).
\bibitem{Ho}
W. Hodges, \emph{Model theory. Encyclopedia of Mathematics and its Applications, 42}, Cambridge University Press, Cambridge, 1993.
\bibitem{Ke}
A. Kechris, \emph{Classical descriptive set theory}, Graduate Texts in Mathematics, 156. Springer-Verlag, New York, 1995.
\bibitem{KuSo}
W. Kubi\' s, S. Solecki, \emph{A proof of uniqueness of the Gurarii space}, Israel J. Math. 195 (2013), no. 1, 449--456.
\bibitem{MeTs}
J. Melleray, T. Tsankov, \emph{Generic representations of abelian groups and extreme amenability}, Israel J. Math. 198 (2013), no. 1, 129--167.
\bibitem{Nie1}
P. Niemiec, \emph{Universal valued Abelian groups}, Adv. Math. 235 (2013), 398-449.
\bibitem{Nie2}
P. Niemiec, \emph{Urysohn universal spaces as metric groups of exponent 2}, Fund. Math. 204 (2009), no. 1, 1--6.
\bibitem{Pe}
V. Pestov, \emph{Dynamics of infinite-dimensional groups.} The Ramsey-Dvoretzky-Milman phenomenon. University Lecture Series, 40. American Mathematical Society, Providence, RI, 2006.
\bibitem{Sk}
S. Shkarin, \emph{On universal abelian topological groups}, Mat. Sb.  190  (1999), no. 7, 127--144.
\bibitem{Ur}
P. S. Urysohn, \emph{Sur un espace m\' etrique universel}, Bull. Sci. Math. 51 (1927), 43--64, 74--96.
\end{thebibliography}
\end{document}